\newcommand{\1}{\mathds{1}}
\DeclarePairedDelimiterX{\norm}[1]{\lVert}{\rVert}{#1}
\DeclarePairedDelimiterX{\bignorm}[1]{\big\lVert}{\big\rVert}{#1}
\newcommand{\RN}[1]{%
  \textup{\uppercase\expandafter{\romannumeral#1}}%
}
\newcommand{\cG}{\mathcal{G}}
\numberwithin{equation}{section}
\theoremstyle{plain}
\newtheorem{definition}{Definition}[section]
\newtheorem{remark}[definition]{Remark}
\theoremstyle{plain}
\newtheorem{theorem}{Theorem}[section]
\newtheorem{lemma}[theorem]{Lemma}
\newtheorem{prop}[theorem]{Proposition}
\newtheorem{claim}[theorem]{Claim}
\title{Mixing Times for the Facilitated Exclusion Process}
\author[J. Ayre]{James Ayre}
\address{J. Ayre, Nuclear Department, HMS Sultan, Gosport, PO12 3BY, United Kingdom}
\email{\href{mailto:james.ayre104@mod.gov.uk}{james.ayre104@mod.gov.uk}}
\thanks{James Ayre was supported by EPSRC grant EP/S515541/1.}
\author[P. Chleboun]{Paul Chleboun}
\address{P. Chleboun, Department of Statistics, University of Warwick,  Coventry,  CV4 7AL,  United Kingdom }
\email{\href{mailto:paul.i.chleboun@warwick.ac.uk}{paul.i.chleboun@warwick.ac.uk}}
\date{\today}
\subjclass[2020]{60K35, 60J27, 82C22}
\begin{document}

\begin{abstract}
The facilitated simple exclusion process (FEP) is a one--dimensional exclusion process with a dynamical constraint. 
We establish bounds on the mixing time of the FEP on the segment, with closed boundaries, and the circle. 
The FEP on these spaces exhibits transient states that, if the macroscopic density of particles is at least $1/2$, the process will eventually
exit to reach an ergodic component. 
If the
macroscopic density is less than $1/2$ the process will hit an absorbing state.
We show that the symmetric FEP (SFEP) on the segment $\{1,\ldots,N\}$, with $k>N/2$ particles, has mixing time of order $N^{2}\log(N-k)$ and exhibits the pre--cutoff phenomenon. 
For the asymmetric FEP (AFEP) on the segment, we show that there
exists initial conditions for which the hitting time of the ergodic component is exponentially slow
in the number of holes $N-k$. 
In particular, when $N-k$ is large enough, the hitting time of the
ergodic component determines the mixing time.
For the SFEP on the circle of size $N$, and macroscopic particle density $\rho \in(1/2,1)$,
we establish bounds on the mixing time of order $N^{2}\log N$ for the process restricted to its ergodic component.
We also give an upper bound on the hitting time
of the ergodic component of order $N^{2}\log N$ for a large class of initial conditions.
The proofs rely on couplings with exclusion processes (both open and closed boundaries) via a novel lattice path (height function) construction of the FEP.
\end{abstract}

\maketitle

\input{tikz_paper}

%%%%%%%%%%%%%%%%%%%%%%%%%%%%%%%%%%%%%%%%%%
%%%%%%%%%%%%%%%%%%%%%%%%%%%%%%%%%%%%%%%%%%

\section{Introduction}

%%%%%%%%%%%%%%%%%%%%%%%%%%%%%%%%%%%%%%%%%%
%%%%%%%%%%%%%%%%%%%%%%%%%%%%%%%%%%%%%%%%%%

The facilitated simple exclusion process (FEP) was originally introduced by Rossi, Pastor--Satorras and Vespignani \cite{uclassfield} as a one dimensional system with active--absorbing phase transition in the presence of a conserved field. 
It is a one--dimensional exclusion process (at most one particle per site) with a dynamical constraint that prevents a particle at site $x$ from jumping to site $x\pm1$ unless there is a particle at site $x\mp1$.
That is, a particle can only move if it has one empty neighbour and one occupied neighbour. 
The processes can exhibit frozen regions in which all the particles are separated by empty sites and, therefore, stuck until the first time their neighbour is occupied. 
If, on a finite system, the density of particles is smaller than $1/2$ then the process will eventually be absorbed in one of these frozen configurations.
On the other hand, at densities greater than $1/2$, the system will exhibit transient states but eventually evolve to reach an ergodic component of the state space.  
In this article, we focus on the case of densities greater than $1/2$ and investigate the time to reach equilibrium, including the time to escape from the transient component.

Since its introduction, the FEP has attracted significant interest in physics and mathematics literature. 
Gabel, Krapivsky and Redner \cite{PhysRevLett.105.210603} investigated the steady-state current and distribution of cluster sizes of the totally asymmetric FEP on the circle. 
Moreover, on the infinite line, suitable initial conditions give rise to a shock wave or a rarefaction wave with a jump discontinuity at the front.
Baik et al.\ \cite{Jinho2018} investigated statistics of particle positions in the totally asymmetric FEP (TAFEP) on the integers with step initial condition. 
The translation invariant stationary states of the TAFEP on the integers have been investigated by Goldstein, Lebowitz and Speer \cite{GLSexact,GLSdiscrete}, as well as the translation invariant states for the symmetric FEP (SFEP) on the integers with synchronous discrete time dynamics \cite{glsstat}.
The translation invariant stationary states of the asymmetric FEP (AFEP) have been examined by Ayyer et al.\ \cite{ayyer2020stationary}.
Invariant measures, in the case when the density is less than or equal to $1/2$, have also been examined in \cite{zhaochen}.
In the physics literature, the critical behaviour such as the critical exponents have been studied in \cite{PhysRevE.79.041143,PhysRevE.71.016112,PhysRevE.64.016123}. 

The macroscopic behaviour of the FEP, at large time and space scales, is  rich, since it can exhibit both active and inactive phases.
Recently, significant progress has been made on the hydrodynamics limit behaviour of the FEP in one dimension.
In particular, SFEP on the circle (with periodic boundary conditions) has been shown to satisfy a Stefan problem in the hydrodynamic limit \cite{blondel2020,Blondel_2021}.
It is shown that in super--critical regions the macroscopic density profile satisfies a diffusion equation with a dynamic free boundary, which invades the sub--critical regions, until one of the phases disappears.
The hydrodynamic limit for the process in contact with reservoirs has recently been considered in \cite{dacunha2024hydrodynamic}.
For the AFEP, a hyperbolic Stefan problem for the hydrodynamic limit has recently been derived in \cite{ESZ}.
Fluctuations have also been considered very recently.
Barraquand et al. \cite{barraquand2023weakly} investigate the fluctuations in the weakly asymmetric setting with step--like initial distributions.
The stationary macroscopic equilibrium fluctuations, in the symmetric and weakly asymmetric situations, have also been considered in \cite{EZ23}.

This article concerns the mixing time of the FEP, i.e. the speed of convergence to equilibrium started from `bad' initial configurations.
In particular, this includes the time to escape from transient states and, subsequently, to mix on an ergodic component.
We treat the one--dimensional system on a segment (with closed boundaries) and on the circle (with periodic boundaries).
On the segment we consider both the symmetric and asymmetric models, whereas on the circle we only consider the symmetric system.
In particular, in this work, we consider the case in which the dynamics restricted to the ergodic component are reversible with respect to the equilibrium distribution.
On the segment of $N$ sites, $[N]=\{1,\ldots,N\}$, with $k>N/2$ particles, we show that the mixing time of the SFEP is of order $N^{2}\log(N-k)$ and exhibits the pre--cutoff phenomenon (see Theorem \ref{th:fssepsegment}). 
In contrast, for the asymmetric FEP on the segment, we show that there
exists initial conditions for which the hitting time of the ergodic component is exponentially large
in the number of holes $N-k$. 
This phenomena turns out to be related to the reverse bias phase of the open boundary exclusion process for which the mixing time has been considered in \cite{gantert2020mixing}. 
In particular, when $N-k$ is large enough, the hitting time of the
ergodic component determines the mixing time, which is exponentially long in the system size (see Theorem \ref{th:fasepsegment}).
For the SFEP on the circle of size $N$ and macroscopic particle density $\rho \in(1/2,1)$,
we establish bounds on the mixing time of order $N^{2}\log N$ for the process restricted to its ergodic component.
We also give an upper bound on the hitting time
of the ergodic component of order $N^{2}\log N$ for a large class of initial conditions (see Proposition \ref{prop:hiterg}).

The proofs of our main results rely heavily on couplings with exclusion processes (both open and closed boundaries). 
One of the main novelties here is a new graphical construction that allows us to couple the dynamics of the FEP on the interval with both closed and open boundary exclusion models in terms of lattice paths (height functions) that are stochastically monotone (see Section \ref{sec:lattice}--\ref{sec:coupleSEP}).
%Similar lattice path dynamics have seen significant independent investigation and have many applications (see, for example, \cite{LacoinIsing}).
We believe that the construction here may have applications in the future for studying FEP models, for example, in establishing cutoff results. 
Once we have established this coupling, we appeal to recent results on the mixing time of exclusion processes (see, for example, \cite{gantert2020mixing,tran2023,salez2023}). 
For the time to escape the transient states, we use recent results of Gantert, Nestoridi and Schmid \cite{gantert2020mixing} on the mixing time for the open boundary exclusion processes (both symmetric and asymmetric). 
We also appeal to results for sharp mixing of the exclusion process, such as \cite{lacoinsegment} in the symmetric setting and \cite{cutoffasep} in the asymmetric case. 
%For the lower bound we use the Aldous and Brown \cite{aldousbrown} hitting time bound. 
For the lower bound on the circle we restrict to the ergodic component and map to an exclusion process on a smaller circle of size $k$. 
The lower bound for the FEP then follows by adapting the lower bound for the symmetric simple exclusion process (SSEP) from Morris \cite{Morris_2006} (see Section \ref{sec:cirlower}).  
In Section \ref{sec:lowersob}, we bound the log-Sobolev constant using a quasi-factorization result due to Cesi \cite{cesientropy}, and known bounds on the log-Sobolev constant for the SSEP \cite{yaulogsobolev}. 
This yields an upper bound of $N^2\log N$ restricted to the ergodic component.
Using an approach based on Yau's relative entropy method, bounds on the log--Sobolev constant have recently been used to get sharp results on convergence to equilibrium for the symmetric exclusion process in contact with reservoirs \cite{GoncalvesReservoir}. 
There is hope that similar methods could also apply for the FEP.
The bound for the hitting time from a class of initial is established using a mapping to a zero--range process (see, for example, \cite{gantert2020mixing}) and again a comparison with open boundary exclusion processes. 

%%%%%%%%%%%%%%%%%%%%%%%%%%%%%%%%%%%%%%%%%%
%%%%%%%%%%%%%%%%%%%%%%%%%%%%%%%%%%%%%%%%%%

\section{Notation and results}
\label{sec:notation}
%%%%%%%%%%%%%%%%%%%%%%%%%%%%%%%%%%%%%%%%%%
%%%%%%%%%%%%%%%%%%%%%%%%%%%%%%%%%%%%%%%%%%

In the dynamics of the FEP a particle at site $x$ attempts to jump right to site $x+1$ at rate $p$, only doing so if there is a hole at site $x+1$ and a particle at site $x-1$. 
Furthermore, a particle at site $x$ attempts to jump left to the site $x-1$ at rate $q$, only doing so if there is a hole at site $x-1$ and a particle at site $x+1$. 
See, for example, Figure \ref{fig:leftrightjump}.
\begin{figure}[tb]
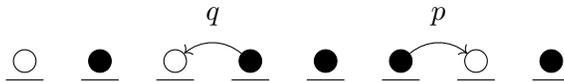

\centering
\segtransrates
\caption{Transition rates for the FEP on the segment. The indicated jumps are the only ones that may occur in the dynamics.}
\label{fig:leftrightjump}
\end{figure}
% This configuration is not ergodic as the leftmost endpoint is not occupied

For the FEP on the segment, we consider the state space
\begin{equation} \label{eq:segstate}
    \Omega_{N,k}=\Biggl\{\xi\in\{0,1\}^{N}\Bigm|\sum_{x=1}^{N}\xi(x)=k\Biggr\},
\end{equation}
i.e.\ the set of exclusion configurations on the segment of size $N$ with exactly $k$ particles. 
The generator $\mathcal{L}$ of FEP on the segment acts on functions $f:\{0,1\}^{N}\rightarrow\mathbb{R}$ via 
\begin{equation}
\begin{aligned} \label{eq:FEPgenseg}
    \mathcal{L}f(\xi)=&\sum_{x=2}^{N-1}p\xi(x)\xi(x-1)\bigl(1-\xi(x+1)\bigr)\bigl[f(\xi^{x,x+1})-f(\xi)\bigr]
    \\[1ex] &+\sum_{x=2}^{N-1}q\xi(x)\xi(x+1)\bigl(1-\xi(x-1)\bigr)\bigl[f(\xi^{x,x-1})-f(\xi)\bigr]\,,
\end{aligned}
\end{equation}
where $\xi^{x,y}$ denotes the configuration obtained from $\xi$ after interchanging the local configuration at $x$ and $y$, i.e. $\xi^{x,y}(z)=\xi(z)\1_{z\notin \{x,y\}}+\xi(y)\1_{z=x}+\xi(x)\1_{z=y}$.

As the number of particles is conserved in the FEP we may restrict the dynamics to the state space $\Omega_{N,k}$ in \eqref{eq:segstate}. 
Henceforth, we assume that $k>N/2$ so that the FEP on the segment (and circle) will always reach an ergodic component.

\begin{definition} \label{def:ergseg}
We define the \textit{ergodic component} $\mathcal{E}_{N,k}\subset\Omega_{N,k}$ for the FEP on the segment by
$$
\mathcal{E}_{N,k}=\bigl\{\xi\in\Omega_{N,k}: \forall\,x\in [N-1],\;\xi(x)+\xi(x+1)\geq 1 \textrm{ and } \xi(1)\xi(N)=1\bigr\}. 
$$
\end{definition}
In particular, on $\mathcal{E}_{N,k}$ the endpoints of the segment are occupied and no two holes are adjacent. 
\begin{figure}[tb]
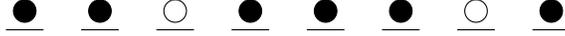

\centering
\ergodicfigex
\caption{An ergodic configuration in $\mathcal{E}_{8,6}$.}
\end{figure}

The FEP on the segment with parameter $p\in[1/2,1)$ is reversible and its equilibrium distribution $\mu_{N,k}$ is supported on the ergodic component. 
For $\xi\in\mathcal{E}_{N,k}$ and $i\in[N-k]$ we let $x_{i}$ denote the position of the $i$--th leftmost hole on the segment. 
If $p>1/2$, letting $\lambda=q/p$,  the detailed balance equations yield 
\begin{equation}
    \label{eq:int-stat}
    \mu_{N,k}(\xi)=\frac{\lambda^{-A(\xi)}}{\sum_{\xi'\in\mathcal{E}_{N,k}}\lambda^{-A(\xi')}}\,, \quad \textrm{where} \quad A(\xi)=\sum_{i=1}^{N-k}x_{i}\,.
\end{equation}
When $p=1/2$ then $\mu_{N,k}$ is the uniform distribution on $\mathcal{E}_{N,k}$.

Let $P_{t}^{\xi}$ denote the law of the FEP with initial condition $\xi\in\Omega_{N,k}$ at time $t$. 
Let
\begin{equation} \label{eq:mixdist}
d_{N,k}(t)=\max_{\xi\in\Omega_{N,k}}\bigl\lVert P_{t}^{\xi}(\cdot)-\mu_{N,k}(\cdot)\bigr\rVert_{\mathrm{TV}}, 
\end{equation}
denote the total variation distance between the law of the FEP at time $t$, started from the worst--case initial condition, and the stationary measure $\mu_{N,k}$. 
We denote the $\epsilon$--mixing time by
$$
T_{\mathrm{seg}}^{N,k}(\epsilon):=\inf\bigl\{t\geq0:d_{N,k}(t)\leq\epsilon\bigr\}.
$$ 

Let $\mathbb{T}_{N}:=\mathbb{Z}/N\mathbb{Z}$ denote the discrete circle of $N$ sites.
For the FEP on the circle, we consider the state space 
\begin{equation} \label{eq:cirstate}
\Omega_{N,k}^\circ:=\Biggl\{\xi\in\{0,1\}^{\mathbb{T}_{N}}\Bigm| \sum_{x\in\mathbb{T}_{N}}\xi(x)=k\Biggr\},
\end{equation}
i.e.\ the set of exclusion configurations on the circle of size $N$ with exactly $k$ particles. 
The generator $\mathcal{L}^{\circ}$ of the FEP on the circle acts on functions $f:\{0,1\}^{\mathbb{T}_{N}}\rightarrow\mathbb{R}$ via
\begin{equation}
\begin{aligned} \label{eq:genFEPcirc}
\mathcal{L}^{\circ}f(\xi)=&\sum_{x\in\mathbb{T}_{N}}\frac{1}{2}\xi(x)\xi(x-1)\bigl(1-\xi(x+1)\bigr)\bigl[f(\xi^{x,x+1})-f(\xi)\bigr] \\[1ex]
&+\sum_{x\in\mathbb{T}_{N}}\frac{1}{2}\xi(x)\xi(x+1)\bigl(1-\xi(x-1)\bigr)\bigl[f(\xi^{x,x-1})-f(\xi)\bigr]\,.
\end{aligned}
\end{equation}

\begin{definition} \label{def:ergcirc}
We define the \textit{ergodic component} $\mathcal{G}_{N,k}$ for the FEP on the circle by 
$$
\mathcal{G}_{N,k}=\bigl\{\xi\in\Omega_{N,k}^\circ:\forall \,x\in\mathbb{T}_{N}\,,\; \xi(x)+\xi(x+1)\geq1 \bigr\}\,.
$$
\end{definition}
In particular, on $\mathcal{G}_{N,k}$, no two holes are adjacent. 

\begin{figure}[tb]
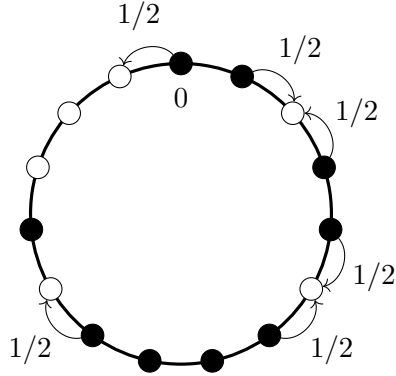

\centering
\cirtransrates
\caption{Transition rates for the SFEP on the circle. The indicated jumps are the only ones which may occur in the dynamics. The configuration in $\Omega_{15,9}^{\circ}$ does not belong to the ergodic component $\mathcal{G}_{N,k}$ as there are two adjacent holes.}
\label{fig:cirtransrates}
\end{figure}

We only consider the SFEP on the circle; the SFEP is reversible on the ergodic component and its equilibrium distribution $\nu_{N,k}$ is uniform on $\mathcal{G}_{N,k}$. 
The asymmetric FEP on the circle is not reversible, and the methods we use here do not generalise.

With an abuse on notation we let $P_{t}^{\xi}$ denote the law of the FEP on the circle with initial condition $\xi\in\Omega_{N,k}^\circ$, and let 
$$
d_{N,k}^{\circ}(t):=\max_{\xi\in\Omega_{N,k}^\circ}\bigl\lVert P_{t}^{\xi}-\nu_{N,k}\bigr\rVert_{\mathrm{TV}}.
$$
We denote the $\epsilon$--mixing time by 
$$
T_{\mathrm{cir}}^{N,k}(\epsilon):=\inf\bigl\{t\geq0:d_{N,k}^{\circ}(t)\leq\epsilon\bigr\}.
$$ 

%%%%%%%%%%%%%%%%%%%%%%%%%%%%%%%%%%%%%%%%%%
%%%%%%%%%%%%%%%%%%%%%%%%%%%%%%%%%%%%%%%%%%

\subsection{Results for the segment}

%%%%%%%%%%%%%%%%%%%%%%%%%%%%%%%%%%%%%%%%%%
%%%%%%%%%%%%%%%%%%%%%%%%%%%%%%%%%%%%%%%%%%

\begin{theorem}[SFEP] \label{th:fssepsegment} 
For the FEP on the segment with parameter $p=1/2$, let $k(N)$ be a sequence satisfying $N/2<k(N)<N$ such that $N-k$ and $k-N/2$ go to infinity. 
For all $\epsilon\in(0,1)$ there exists positive constants $0<C_{1}<C_{2}$ which do not depend on $\epsilon$ such that 
\begin{equation} \label{eq:fssep1}
C_{1}\leq\liminf_{N\rightarrow\infty}\frac{T^{N,k}_{\mathrm{seg}}(\epsilon)}{N^{2}\log (N-k)}\leq\limsup_{N\rightarrow\infty}\frac{T^{N,k}_{\mathrm{seg}}(\epsilon)}{N^{2}\log (N-k)}\leq C_{2}. 
\end{equation}
\end{theorem}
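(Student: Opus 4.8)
The plan is to split the relaxation into two stages, exploiting that the stationary measure $\mu_{N,k}$ is supported on the ergodic component $\mathcal{E}_{N,k}$, which is absorbing for the dynamics (the endpoints never vacate and, inside $\mathcal{E}_{N,k}$, holes stay non-adjacent). The two stages are the time $\tau_{\mathcal{E}}$ needed to enter $\mathcal{E}_{N,k}$ from the worst transient start, and the time to equilibrate once inside $\mathcal{E}_{N,k}$. Each stage is controlled by transferring, through the lattice path (height function) construction, to a genuine exclusion process for which sharp mixing estimates are available.

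For the second stage I would first record the exact correspondence between the ergodic SFEP and a symmetric simple exclusion process. Writing the holes of $\xi\in\mathcal{E}_{N,k}$ as $x_{1}<\cdots<x_{N-k}$ and setting $y_{i}:=x_{i}-i$, the defining constraints of $\mathcal{E}_{N,k}$ (occupied endpoints, no two adjacent holes) become $1\le y_{1}<\cdots<y_{N-k}\le k-1$, and a facilitated hole jump---legal exactly when the adjacent gap has length at least two---corresponds to an unconstrained nearest-neighbour exclusion jump of $y_{i}$ on the segment $\{1,\ldots,k-1\}$ with reflecting boundaries. With $p=1/2$ the rates are symmetric and $\mu_{N,k}$ pushes forward to the uniform SSEP measure. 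Since $k\asymp N$ the segment has length $\Theta(N)$ and carries $N-k$ particles, so the sharp results of \cite{lacoinsegment} bound the relaxation inside $\mathcal{E}_{N,k}$ by $O\bigl(N^{2}\log(N-k)\bigr)$.

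For the first stage I would use the height-function coupling to compare the transient dynamics---in which maximal blocks of adjacent holes must be healed---with an open boundary symmetric exclusion process, the reservoirs modelling the interface between a defect and the surrounding ergodic region. Stochastic monotonicity of the coupling then bounds $\tau_{\mathcal{E}}$ from any start, and the Gantert--Nestoridi--Schmid estimates \cite{gantert2020mixing} for symmetric open boundary exclusion give $\tau_{\mathcal{E}}=O\bigl(N^{2}\log(N-k)\bigr)$. Combining the two stages by the strong Markov property yields $\limsup_{N}T^{N,k}_{\mathrm{seg}}(\epsilon)/\bigl(N^{2}\log(N-k)\bigr)\le C_{2}$. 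For the matching lower bound I would start from a worst transient configuration (all $N-k$ holes in one block) and use $d_{N,k}(t)\ge\max_{\xi}\mathbb{P}^{\xi}(\tau_{\mathcal{E}}>t)$, which holds because $\mu_{N,k}(\mathcal{E}_{N,k})=1$ while $\{\tau_{\mathcal{E}}>t\}$ forces the process outside $\mathcal{E}_{N,k}$; together with the lower bound of \cite{gantert2020mixing} (again via the open boundary coupling) this gives $\tau_{\mathcal{E}}\ge c\,N^{2}\log(N-k)$ with non-vanishing probability, hence $\liminf_{N}T^{N,k}_{\mathrm{seg}}(\epsilon)/\bigl(N^{2}\log(N-k)\bigr)\ge C_{1}$. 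When $2k-N$ is comparable to $N-k$, Wilson's method applied to the second-stage SSEP already delivers this order. As the two bounds share the order $N^{2}\log(N-k)$ but not necessarily the constant, this is exactly the pre-cutoff statement.

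The main obstacle is the height-function coupling itself: one must make the facilitation constraint and the transient, partially-absorbing structure of the FEP stochastically comparable, in a monotone way, to bona fide exclusion processes---open for the escape stage, closed for the ergodic stage---which is precisely the content of the graphical construction developed in the earlier sections. A secondary difficulty is checking that the two stages combine at the single order $N^{2}\log(N-k)$ under the stated hypotheses: $N-k\to\infty$ supplies the $\log(N-k)$ signal driving the escape and the lower bound, while $k-N/2\to\infty$ guarantees that the associated SSEP is non-degenerate (it has $L-m=2k-N-1\to\infty$ complementary sites), so that neither stage is artificially fast nor slow.
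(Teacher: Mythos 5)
Your architecture — absorbing ergodic component, two-stage decomposition, hole-coordinate map sending the ergodic SFEP to an SSEP on $\{1,\ldots,k-1\}$ with $N-k$ particles, and an OBEP comparison for the transient stage — is the same as the paper's, but both places where you outsource the quantitative work to \cite{gantert2020mixing} contain genuine gaps. For the upper bound: the mixing-time estimate of Gantert--Nestoridi--Schmid for the symmetric OBEP on $\Omega_{k-1}$ is of order $k^{2}\log k\asymp N^{2}\log N$, because their mixing time is governed by \emph{complete} filling of the segment (absorption of all $k-1$ holes). What the transient stage needs is the time for only $N-k$ holes to be absorbed, and since the hypotheses allow $\log(N-k)\ll\log N$ (e.g.\ $N-k=\log^{2}N$ is admissible, as only $N-k\to\infty$ is assumed), citing their theorem gives $O(N^{2}\log N)$, strictly weaker than the claimed $O\bigl(N^{2}\log(N-k)\bigr)$. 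The paper closes this with a bespoke argument: in the OBEP each hole performs a symmetric random walk with reflection at the closed boundary and absorption at the reservoir, so each is absorbed within time $2k^{2}$ with probability at least $1/2$; iterating via the Markov property and taking a union bound over only the $N-k$ relevant holes gives $O\bigl(k^{2}\log(N-k)\bigr)$.

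The lower bound has a more serious problem: the claim that $\tau_{\mathcal{E}}\geq c\,N^{2}\log(N-k)$ with non-vanishing probability is false in general. If, say, $k/N\to 3/4$, then reaching $\mathcal{E}_{N,k}$ requires only a constant fraction of the holes to exit the OBEP, which happens on the diffusive scale $\Theta(N^{2})$ with \emph{no} logarithmic factor; the logarithm in the GNS lower bound comes precisely from waiting for the last of all $k-1$ holes, i.e.\ from complete filling. Relatedly, their lower bound concerns the complete filling time $\tau_{k-1}$, not the partial filling time $\tau_{N-k}$; the paper transfers it by writing $\tau_{N-k}\geq\tau_{k-1}-(\tau_{k-1}-\tau_{N-k})$ and proving $\tau_{k-1}-\tau_{N-k}\leq Ck^{2}\log(2k-N)$ with high probability, a subtraction that yields the right order only when $\log(2k-N)\ll\log(N-k)$. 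Your fallback case, ``$2k-N$ comparable to $N-k$,'' does not patch the remaining sequences: for $k=N-N^{0.9}$ one has $2k-N\asymp N$, which is not comparable to $N-k=N^{0.9}$, yet the hitting time is $o(N^{2})$, so neither of your two cases produces the bound. The correct dichotomy, as in the paper, is at the level of logarithms: when $\log(2k-N)\asymp\log(N-k)$, Wilson's method on the ergodic SSEP gives order $k^{2}\log\min(N-k,2k-N)\asymp k^{2}\log(N-k)$; when $\log(2k-N)\ll\log(N-k)$, use the hitting-time subtraction argument above.
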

In particular the SFEP mixes on the order of $N^{2}\log(N-k)$. 
Moreover, pre--cutoff holds for the SFEP under the conditions of Theorem \ref{th:fssepsegment}. 
We note that Theorem \ref{th:fssepsegment} covers the regime where the density of particles $k/N\rightarrow\rho\in(1/2,1]$, and $k/N \searrow 1/2$, as $N\rightarrow\infty$. 
We conjecture that cutoff holds for the SFEP under the conditions of Theorem \ref{th:fssepsegment}, i.e.\ we may take $C_{1}=C_{2}$. 

As a consequence of our proof, we observe that cutoff of order $k^{2}\log\min(N-k,2k-N)$ holds for the SFEP restricted to the ergodic component due to cutoff for the SSEP on the segment \cite[Theorem 2.4]{lacoinsegment}. 
In particular, when $\log(2k-N)\ll\log(N-k)$, i.e.\ $\log(2k-N)/\log(N-k)\to 0$ as $N \to \infty$, the full mixing time is determined by the hitting time of the ergodic component. 

For the AFEP on the segment, we show that the mixing time is exponentially slow in the number of holes $N-k$, provided that the number of holes is large enough.  

\begin{theorem}[AFEP] \label{th:fasepsegment}
For the FEP on the segment with parameter $p\in(1/2,1)$, let $\epsilon\in(0,1)$ and $k(N)$ be a sequence such that $k>N/2$ and $N-k \gg \log N$, then the $\epsilon$--mixing time satisfies 
\begin{equation} \label{eq:fasep1}
\lim_{N\rightarrow\infty}\frac{\log T_{\mathrm{seg}}^{N,k}(\epsilon)}{N-k}=\log\Bigl(\frac{p}{q}\Bigr).
\end{equation}
\end{theorem}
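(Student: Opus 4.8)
The plan is to show that, on the exponential scale, the mixing time is controlled by the hitting time of the ergodic component $\mathcal{E}_{N,k}$, and then to estimate this hitting time through the height-function coupling of Sections~\ref{sec:lattice}--\ref{sec:coupleSEP} with an open-boundary exclusion process in its reverse-bias phase. Write $\mathbb{P}_{\xi}$ for the law of the FEP started from $\xi$ and let $\tau_{\mathcal{E}}$ be the hitting time of $\mathcal{E}_{N,k}$. Since $\mathcal{E}_{N,k}$ is closed under the dynamics and $\mu_{N,k}$ is supported on it, for every initial configuration $\xi$ one has $\lVert P_t^{\xi}-\mu_{N,k}\rVert_{\mathrm{TV}}\geq 1-\mathbb{P}_{\xi}(\tau_{\mathcal{E}}\leq t)$, so a lower bound on $T_{\mathrm{seg}}^{N,k}(\epsilon)$ follows from a lower bound on $\tau_{\mathcal{E}}$ for a well-chosen starting configuration. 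Conversely, by the strong Markov property applied at $\tau_{\mathcal{E}}$,
\[
d_{N,k}(t+s)\leq \max_{\xi\in\Omega_{N,k}}\mathbb{P}_{\xi}(\tau_{\mathcal{E}}>t)+\max_{\zeta\in\mathcal{E}_{N,k}}\lVert P_{s}^{\zeta}-\mu_{N,k}\rVert_{\mathrm{TV}},
\]
so an upper bound on $T_{\mathrm{seg}}^{N,k}(\epsilon)$ follows from an upper bound on $\tau_{\mathcal{E}}$ together with control of the mixing time of the FEP restricted to $\mathcal{E}_{N,k}$. On $\mathcal{E}_{N,k}$ the FEP is mapped, via the construction of Sections~\ref{sec:lattice}--\ref{sec:coupleSEP}, to an asymmetric exclusion process on a segment, whose mixing time is polynomial in $N$ by \cite{cutoffasep}; since $N-k\gg\log N$, this contribution is $o(N-k)$ after taking logarithms and dividing by $N-k$, and so does not affect the limit \eqref{eq:fasep1}. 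Everything therefore reduces to showing that $\log\tau_{\mathcal{E}}/(N-k)\to\log(p/q)$ in probability, with the lower bound for the worst-case start and the upper bound uniform over starts.

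The mechanism behind the exponentially large hitting time is a reverse-bias bottleneck at the left boundary. Reading the dynamics in terms of holes, a hole flanked by particles performs a nearest-neighbour walk that steps left at rate $p$ and right at rate $q$; since $p>q$ the holes drift towards site $1$ and accumulate there, and a hole pinned at the left boundary can only be released by a rightward step, which happens at the slow rate $q$. To enter $\mathcal{E}_{N,k}$ the configuration must have site $1$ occupied and no two adjacent holes, which forces the accumulated holes to be transported back to the right, against their drift, over distances of order $N-k$. The cost of moving a hole a net distance $\ell$ against the bias is of order $(p/q)^{\ell}$, and the initial condition with all holes packed against the left boundary makes the required net displacement of order $N-k$; this is the origin of the factor $\log(p/q)$ in \eqref{eq:fasep1}.

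To turn this heuristic into matching bounds with the exact constant, I would use the coupling of Sections~\ref{sec:lattice}--\ref{sec:coupleSEP}. Under this coupling the height function of the FEP in the boundary region is that of an open-boundary asymmetric exclusion process of length of order $N-k$ with bulk bias $p/q$ and with boundary rates dictated by the FEP constraint, and these parameters place the open process in the reverse-bias regime analysed by Gantert, Nestoridi and Schmid \cite{gantert2020mixing}. Their exponential estimates then transfer through the coupling: the upper bound on $\tau_{\mathcal{E}}$ follows from the upper bound on the mixing (or relaxation) time of the open process, which holds from every initial condition, while the lower bound follows by pushing forward the bottleneck set of \cite{gantert2020mixing}, together with the inequality $\lVert P_t^{\xi}-\mu_{N,k}\rVert_{\mathrm{TV}}\geq 1-\mathbb{P}_{\xi}(\tau_{\mathcal{E}}\leq t)$ for the packed initial condition. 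The constant $\log(p/q)$ appears as the exponential growth rate of the associated conductance bottleneck, equivalently as the growth rate of the hitting time of a biased birth--death chain that must traverse a distance $N-k$ against the bias, and it matches on both sides.

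The main obstacle is making the reduction to the open-boundary process exact rather than merely of the right exponential order, so that the precise constant $\log(p/q)$ is recovered in \eqref{eq:fasep1} rather than just a $\Theta$-estimate. Two points require care. First, the boundary correspondence must be identified precisely, so that the constrained left-boundary behaviour of the FEP translates into open-process boundary rates landing exactly in the reverse-bias phase with effective length $(1+o(1))(N-k)$. Second, one must verify that the packed initial condition maps to a configuration realising the full bottleneck of \cite{gantert2020mixing}, and that no alternative, cheaper route into $\mathcal{E}_{N,k}$ exists that would lower the exponent. A convenient way to pin down the constant on the lower-bound side is to dominate the escape by a single biased random walk that must climb a distance $(1+o(1))(N-k)$ against the drift $p/q$, whose hitting time is of order $(p/q)^{N-k}$; matching this against the open-process upper bound then yields the limit \eqref{eq:fasep1}.
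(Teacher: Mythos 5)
Your high-level skeleton agrees with the paper: the mixing time of the AFEP is governed, at the exponential scale, by the hitting time of $\mathcal{E}_{N,k}$, since mixing on the ergodic component is polynomial in $N$ by \cite{cutoffasep} and hence contributes $o(N-k)$ after taking logarithms; and the mechanism is the reverse-bias behaviour of the coupled open exclusion process from \cite{gantert2020mixing}. However, both halves of your argument have gaps exactly where the paper has to work. For the upper bound, the point you defer (``effective length $(1+o(1))(N-k)$'') is the crux, not bookkeeping: Remark \ref{rmk:OBEPisFEP} identifies the FEP started from $\eta^{+}$ with a $(q,q,0,0,0)$-OBEP on $\Omega_{k-1}$, so a direct application of \cite[Theorem 1.4]{gantert2020mixing} gives an exponent of order $k$, which is useless when $N-k\ll k$. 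The paper resolves this with Claim \ref{claim:OBEP}: a coupling of $(q,q,0,0,0)$-OBEPs on $\Omega_{k-1}$ and on $\Omega_{N-k}$ under which the larger system always holds at least as many particles, so the time for $N-k$ particles to enter the large system is dominated by the complete-filling time of the length-$(N-k)$ system, to which the GNS bound applies with the correct exponent. Separately, your uniform bound on $\max_{\xi}\mathbb{P}_{\xi}(\tau_{\mathcal{E}}>t)$ cannot be obtained configuration by configuration, since the OBEP identification holds only for the extremal paths $\eta^{\pm}$; it must be routed through the monotone grand coupling, i.e.\ through $\mathbb{P}(\eta_{t}^{-}\neq\eta_{t}^{+})$ together with hitting estimates for $\eta^{\pm}$ alone, which is how the paper argues.

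The genuine gap is your lower bound, where neither suggestion goes through as stated. A bottleneck/conductance argument cannot be ``pushed forward'': the process whose absorption time must be bounded below (the reverse-bias OBEP, equivalently the FEP before reaching $\mathcal{E}_{N,k}$) has the target as the support of its stationary measure, so there is no equilibrium reference measure against which to run a conductance bound for this hitting time. The domination by ``a single biased random walk climbing $(1+o(1))(N-k)$'' also fails, because of the facilitation constraint: tracking, say, the rightmost hole at position $X_{t}$, whenever the two rightmost holes are adjacent the hole at $X_{t}$ can move \emph{only} to the right (the particle at $X_{t}+1$ jumps left at rate $q$; no particle sits at $X_{t}-1$ to push it back), so $(p/q)^{X_{t}}$ has strictly positive drift in precisely these configurations and is not a supermartingale; since such adjacencies occur persistently from the packed configuration, no pathwise biased-walk lower bound on the traversal time holds. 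The paper's lower bound is a different argument which supplies the missing idea: start from the near-ergodic set $\mathcal{H}_{N,k}$, map to the ZRP (Lemma \ref{lem:ZRPhit}), couple up to the first time site $2$ carries two particles with a constant-rate ZRP whose stationary measure is the explicit product measure $\pi_{n,m}(\omega)\propto\prod_{x}\lambda^{(n+1-x)\omega(x)}$ with $\lambda=q/p$, compute $\pi_{n,m}(\omega(1)=1)\leq\lambda^{n-1}$ with $n=N-k$, and apply the Aldous--Brown hitting-time bound \cite{aldousbrown} to get $\sup_{\xi\in\mathcal{H}_{N,k}}P_{t}^{\xi}(\mathcal{E}_{N,k}^{c})\geq\exp(-2tp\lambda^{N-k})$. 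Some quasi-stationary argument of this type (an explicit stationary measure for an auxiliary chain plus a hitting-time estimate) appears necessary to extract the exact constant $\log(p/q)$ in \eqref{eq:fasep1}, and it is absent from your proposal.
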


Again, as a consequence of the proof, we observe that cutoff of order $N$ holds for the AFEP restricted to the ergodic component due to cutoff for the ASEP on the segment \cite[Theorem 2]{cutoffasep}.
Therefore, the AFEP on the segment mixes very rapidly, with order at most $N$, once it reaches the ergodic component, and the mixing time is dominated by the hitting time of the ergodic component. 

%%%%%%%%%%%%%%%%%%%%%%%%%%%%%%%%%%%%%%%%%%
%%%%%%%%%%%%%%%%%%%%%%%%%%%%%%%%%%%%%%%%%%

\subsection{Results for the circle}

%%%%%%%%%%%%%%%%%%%%%%%%%%%%%%%%%%%%%%%%%%
%%%%%%%%%%%%%%%%%%%%%%%%%%%%%%%%%%%%%%%%%%

Let $\alpha_{N,k}$ denote the log--Sobolev constant for the generator $\mathcal{L}^{\circ}$ restricted to the ergodic component $\mathcal{G}_{N,k}$.  
We briefly recall the definition of the log--Sobolev constant, see for example \cite{DSClogsob} for details. 
The entropy of a non--negative function $f\colon \Omega_{N,k}^\circ \to (0,\infty)$, with respect to the measure $\nu_{N,k}$ on $\mathcal{G}_{N,k}$, is defined by $\mathrm{Ent}_\nu(f) = \nu_{N,k}\left(f \log f\right) - \nu_{N,k}(f)\log\nu_{N,k}(f)$ and the Dirichlet form associated with $\mathcal{L}^\circ$ is given by
\begin{align}
\label{eq:dirichletform}
\mathcal{D}(f) = -\nu_{N,k}(f\mathcal{L}^\circ f) = \frac{1}{2}\sum_{\xi,\xi'\in\mathcal{G}}\nu_{N,k}(\xi)c(\xi,\xi')\bigl(f(\xi)-f(\xi')\bigr)^{2}\,.
\end{align}

The log--Sobolev constant, $\alpha$, is the largest constant  such that 
$$
\alpha\,\mathrm{Ent}_{\nu_{N,k}}(f^{2})\leq\mathcal{D}(f), \quad \textrm{for all}\quad f\colon \Omega_{N,k}^\circ \to \mathbb{R}\,.
$$

The following theorem gives a lower bound on the mixing time and the log--Sobolev constant for the FEP restricted to the ergodic component. 

\begin{theorem} \label{th:ergodicmixing}
Let $\epsilon\in(0,1)$ be given, and let $k(N)>N/2$ be a sequence such that $k(N)/N\rightarrow\rho\in(1/2,1)$ as $N\rightarrow\infty$. 
\begin{itemize}
\item[(a)]
There exists a constant, $C_\rho$, depending only on $\rho$ such that 
\begin{equation} \label{eq:lowercircle1}
\liminf_{N\to \infty} \frac{T^{N,k}_{\mathrm{cir}}(\epsilon)}{N^2\log N} \geq C_\rho\,.
\end{equation}
\item[(b)] 
For $N$ sufficiently large the log--Sobolev constant $\alpha_{N,k}$ for the FEP on $\mathcal{G}_{N,k}$ satisfies
\begin{equation} \label{eq:logsob1}
    \alpha_{N,k}\geq C'N^{-2},
\end{equation}
for some constant $C'>0$ which does not depend on $N$. 
\end{itemize}
\end{theorem}
The conditions on the sequence $k(N)$ are required in Theorem \ref{th:ergodicmixing}(b), but \eqref{eq:lowercircle1} may be shown under more general conditions. 

We note that Theorem \ref{th:ergodicmixing}(b) may be used to upper bound the mixing time restricted to the ergodic component by a standard bound of the mixing time in terms of the log--Sobolev constant. 
In particular, let 
\begin{equation} \label{eq:ergodicmix}
    T_{\mathcal{G}_{N,k}}(\epsilon)=\inf\Bigl\{t\geq0:\max_{\xi\in\mathcal{G}_{N,k}}\bigl\lVert P_{t}^{\xi}-\nu_{N,k}\bigr\rVert_{\mathrm{TV}}\leq\epsilon\Bigl\},
\end{equation}
for $\epsilon\in(0,1)$, then \cite[Corollary 2.2.7]{Saloff-Coste1997} (for example) yields
\begin{equation} \label{eq:uppercircle2}
T_{\mathcal{G}_{N,k}}(\epsilon)\leq\frac{\lceil\log_{e/2}(\epsilon^{-1})\rceil}{4\alpha_{N,k}}\Bigl(4+\log^{+}\log\bigl\lvert\mathcal{G}_{N,k}\bigr\rvert\Bigr),
\end{equation}
where $\log^{+}t=\max(0,\log t)$.

Using a straightforward counting argument, see for example \cite[Lemma 6.1]{blondel2020}, the size of the state space is given by
\begin{equation}\label{eq:circergodiccount}
\bigl\lvert\mathcal{G}_{N,k}\bigr\rvert=\binom{k}{N-k}+\binom{k-1}{N-k-1}=\frac{N}{k}\binom{k}{N-k}.
\end{equation}
Using the standard bounds on the binomial coefficient and  
the fact that $N/k\leq2$, it follows that 
\begin{equation} \label{eq:logsize}
\log^{+}\log\bigl\lvert\mathcal{G}_{N,k}\bigr\rvert \leq \log(N-k)+\log^{+}\log\Biggl(2^{1/(N-k)}\frac{e\cdot k}{N-k}\Biggr)=O(\log N),
\end{equation}
as $N\rightarrow\infty$ and $k/N \to \rho \in (1/2,1)$. 
It follows from Theorem \ref{th:ergodicmixing}(b), \eqref{eq:uppercircle2} and \eqref{eq:logsize} that 
\begin{equation} \label{eq:uppercircle3}
    T_{\mathcal{G}_{N,k}}(\epsilon)\leq C(\epsilon) \, N^{2}\log N,
\end{equation}
as $N\rightarrow\infty$ for some constant $C(\epsilon)>0$. 

Before stating the result for the hitting time of the ergodic component we introduce some notation. 
\begin{definition} \label{def:regions}
Let $\xi\in\mathcal{G}_{N,k}^{c}$ and define $[x,y]=\{x,x+1,\ldots,y\}\subset\mathbb{T}_{N}$ to be a (clockwise) interval on the circle. 
We say that $[x,y]$ is an \textit{ergodic interval} if $\xi(x)=1$, $\xi(y)=1$ and $\xi(z)+\xi(z+1)\geq 1$ for all $z\in[x,y]$ such that $z+1\in[x,y]$. 
In particular the singleton $\{x\}$ is an ergodic interval if $\xi(x)=1$.
We say that an ergodic interval $I$ for the configuration $\xi$ \textit{contains} $k_{I}$ particles if 
$$
\sum_{x\in I}\xi(x)=k_{I}.
$$

We define the set of \textit{ergodic regions} of $\xi$ to be the smallest set of ergodic intervals $\{I_{1},\ldots,I_{n}\}$ which contain all particles in $\xi$. 
In particular, any ergodic interval $[x,y]$ satisfies $[x,y]\subseteq I_{i}$ for some $i\in[n]$, and $\xi(x)=0$ for all $x\notin I_1\cup\cdots\cup I_n$.

We define $\mathcal{I}_{N,k}^{m}\subset\mathcal{G}_{N,k}^{c}$ to be the configurations for which there exists a set of at most $m$ ergodic regions containing at least $N-k$ particles collectively, i.e.\
\begin{align*}
\mathcal{I}_{N,k}^{m}=\Biggl\{\xi\in\mathcal{G}_{N,k}^{c}&:\mbox{ there exists a set of ergodic regions }\{I_{1},\ldots,I_{m'}\},\\[1ex]
&m'\leq m\mbox{ such that }\sum_{i=1}^{m'}\sum_{x\in I_{i}}\xi(x)\geq N-k\Biggr\}.
\end{align*}
\end{definition}
See Figure \ref{fig:regions1} for an example of ergodic regions.
\begin{figure}[tb]
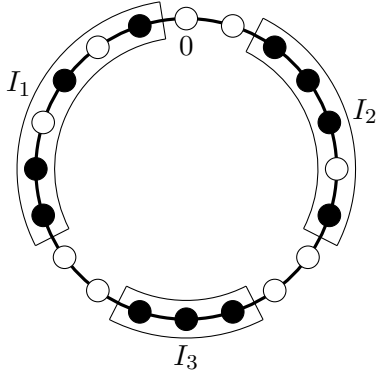

\begin{center}
\regions
\caption{(Ergodic regions) The figure above represents a configuration in $\mathcal{G}_{20,11}^{c}\subset\Omega_{20,11}^{\circ}$. Black circles denote particles and white circles denote holes. The ergodic regions from Definition \ref{def:regions} are given by the intervals $I_{1}=[14,19],I_{2}=[2,6]$ and $I_{3}=[9,11]$.}
\label{fig:regions1}
\end{center}
\end{figure}

Proposition \ref{prop:hiterg} states that the process started from initial configurations in $\mathcal{I}_{N,k}^{m}$ will hit the ergodic component with high probability after a time of order $N^2\log N$.
\begin{prop} \label{prop:hiterg}
Let $\epsilon>0$, $k(N)$ be a sequence, $m>0$ a positive integer that does not depend on $N$.
There exists a constant $C=C(m)>0$ (not depending on $\epsilon$) such that 
\begin{equation} \label{eq:regionhit1}
\max_{\xi\in\mathcal{I}_{N,k}^{m}}P^{\xi}_{C(m)N^{2}\log N}\bigl(\mathcal{G}_{N,k}^{c}\bigr)<\epsilon,
\end{equation}
for all $N$ sufficiently large depending on $\epsilon$.
\end{prop}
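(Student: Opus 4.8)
The plan is to work entirely in the zero--range picture of Section \ref{sec:coupleZRP} and to reduce the problem to filling a bounded number of empty intervals, each of which I would control by comparison with an open boundary exclusion process. By Lemma \ref{lem:ZRPhit}, writing $M=N-k$ and letting $(\omega_t)_{t\geq0}$ be the ZRP on $\mathbb{T}_M$ associated to the FEP started at $\xi$, the hitting time $\tau_\mathcal{G}$ coincides with $\tau_{\mathrm{fill}}=\inf\{t\geq0:\omega_t(i)\geq1\ \forall i\in\mathbb{T}_M\}$. The first key observation is that filling is \emph{monotone}: a site carrying exactly one particle cannot lose it, since the escape rate vanishes unless $\omega(i)\geq2$, so the occupied set $\{i:\omega_t(i)\geq1\}$ is non--decreasing in $t$. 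Combined with the attractivity of the ZRP (the jump rate $\mathds{1}_{\{\omega(i)\geq2\}}$ is non--decreasing in $\omega(i)$), which provides a coordinatewise monotone coupling, the time $\tau_{\mathrm{fill}}$ is stochastically decreasing in the initial configuration.

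Using this I would first discard all mass lying outside the distinguished ergodic regions. If $\xi\in\mathcal{I}_{N,k}^m$, fix ergodic regions $I_1,\dots,I_{m'}$ with $m'\leq m$ and $\sum_i\sum_{x\in I_i}\xi(x)\geq N-k=M$, and let $\omega_0^-\leq\omega_0$ be obtained from $\omega_0$ by setting to zero every coordinate not corresponding to a gap strictly inside one of the $I_i$. Then $\omega_0^-$ carries at least $M$ particles supported on at most $m$ blocks of consecutive occupied sites, its complement is a union of at most $m$ empty blocks, and $\tau_{\mathrm{fill}}(\omega_0)\leq\tau_{\mathrm{fill}}(\omega_0^-)$ by the coupling above. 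The point of zeroing out the remaining mass is that $\omega_0^-$ now has a clean structure with at most $m$ occupied and at most $m$ empty blocks, and these occupied blocks still carry enough mass to cover the whole torus, since $\abs{\mathbb{T}_M}=M$ and they hold at least $M$ particles.

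It then remains to bound $\tau_{\mathrm{fill}}(\omega_0^-)$, which I would do block by block. An empty block $B$ is flanked by occupied blocks and is filled as particles are pushed in from a boundary site of an adjacent occupied block: whenever that boundary site carries at least two particles, one is transferred into the empty neighbour at rate $1/2$, which thereafter remains occupied. This is exactly the entry mechanism of Remark \ref{rmk:OBEPisFEP}, so I would couple the filling of $B$ from one side with the entry of particles into an OBEP with parameters $(1/2,0,0,0,1/2)$ on a segment of length $\abs{B}\leq M$, and estimate the fill time via the reflected/absorbed simple random walk representation of the holes used in the proof of Theorem \ref{th:fssepsegment} in Section \ref{sec:segproofs}. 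This gives a bound of the form $\mathbb{P}(B\text{ not filled by }CM^2\log M)\leq\epsilon/m$ for $C$ large; a union bound over the at most $m$ empty blocks then produces \eqref{eq:regionhit1} with $C=C(m)$, using $M^2\log M\leq N^2\log N$.

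The main obstacle is that, unlike on the segment of Section \ref{sec:segproofs}, there is no genuine reservoir feeding the boundary of an occupied block on the circle: the injection into $B$ is sustained only while the flanking occupied block retains at least two particles at its boundary site, so I must rule out that an occupied block runs dry before the neighbouring empty blocks are sealed. Globally this is guaranteed by the mass count, since the retained blocks hold at least $M$ particles, exactly the number of sites to be covered, and the surplus can be redistributed within an occupied block on the diffusive $O(M^2)$ scale; the delicate point is controlling the exchange of mass between neighbouring regions before the thin, singly--occupied filled strips seal them off, i.e. the interaction of the fronts of adjacent blocks. In the extreme case $m=1$ this difficulty disappears, as the single occupied block borders the single empty block on both sides and trivially supplies it, which is consistent with the conjecture in Remark \ref{rmk:circon} that one ergodic region of length $k$ is the worst case; the general bounded--$m$ regime is then handled by combining the OBEP comparison on each block with the global mass bound and the union bound, at the cost of a constant $C(m)$ depending on $m$.
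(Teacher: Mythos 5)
Your first half coincides with the paper's: pass to the ZRP on $\mathbb{T}_{N-k}$ via Lemma \ref{lem:ZRPhit}, use attractivity together with the fact that the filled event is increasing to prune all mass outside the $m$ distinguished regions, and reduce to a configuration $\bar\omega$ with at most $m$ occupied blocks carrying at least $N-k$ particles. The gap is in the second half. Your plan is to fill each empty block $B$ separately, coupling it to an OBEP with parameters $(1/2,0,0,0,1/2)$ fed by an adjacent occupied block, and then take a union bound. But the per-block estimate $\mathbb{P}(B\text{ not filled by }CM^{2}\log M)\leq\epsilon/m$ is not true in general for $m\geq2$: an occupied block whose particle count is no larger than its eventual spread can \emph{freeze} (relax to density exactly one) with substantial probability before $B$ is sealed, after which injection into $B$ stops entirely. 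Block $B$ then receives no particles until the front of some \emph{other} region merges with its flanking region and re-supplies it, and that waiting period is not accounted for in any per-block OBEP estimate taken simultaneously over all blocks. You name exactly this obstacle, but the proposed resolution --- ``global mass bound'' plus ``redistribution on the diffusive scale'' --- is the heuristic reason the result is true, not an argument: mass moves between regions only through merging, which is an inherently sequential mechanism incompatible with a one-shot union bound.

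The paper resolves this with two ingredients you are missing. First, a stopping-time decomposition: $\tau_{s}$ is the first time after $\tau_{s-1}$ at which either two regions merge, a region becomes entirely frozen, or the ergodic component is reached; starting from at most $m$ regions there are at most $2m-1$ such phases. Second, within a phase one does \emph{not} try to fill a specific empty block; instead one discards all but a single non-frozen region $\hat\omega$ (monotonicity again) and shows (Claim \ref{claim:zero_obep}) that this single region reaches the set ``ergodic \emph{or} frozen'' within time $Cn^{2}\log n$. This is done by a different coupling from yours: the whole region, not the empty block, is mapped by $\Phi$ to an OBEP with injection at \emph{both} boundaries, parameters $(1/2,1/2,0,0,1/2)$, on $\Omega_{\ell-1}$ with $\ell$ the number of particles in the region; its unique absorbing state is the full configuration, and \cite[Theorem 1.1]{gantert2020mixing} gives the time bound. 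The conceptual move is to accept ``frozen'' as a legitimate end-of-phase outcome --- since not all regions can freeze simultaneously by the mass count --- and to chain the at most $2m-1$ phases with the strong Markov property, which is where the constant $C(m)$ and the overall bound $C(2m-1)N^{2}\log N$ come from. Without this phase structure your argument cannot control the periods during which a flanking region has run dry, so as written the proof does not go through for $m\geq 2$.
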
 

\begin{remark} \label{rmk:circon}
Theorem \ref{th:ergodicmixing} together with Proposition \ref{prop:hiterg} gives that the mixing time started from any `reasonable' initial condition is $N^2\log N$, up to constant prefactors. 
The approach in Proposition \ref{prop:hiterg} is insufficient to obtain a general upper bound of order $N^2\log(N)$ on the hitting time of the ergodic component.
Controlling the hitting time of the ergodic component for general initial conditions is beyond the reach of the methods used here.
We conjecture that the initial conditions that maximise the hitting time of the ergodic component and control the mixing time are those in which there is one ergodic region of length $k$. 
These initial conditions are contained in the set $\mathcal{I}_{N,k}^{m}$ for each $m$. 
We also conjecture that the SFEP on the circle exhibits cutoff under suitable conditions on the initial configuration. \footnote{Since the article was first submitted substantial progress has been made: Erignoux and Massouli\'{e} \cite{ClementBrune24} give uniform estimates on the transience time of order $N^2\log N$ and show that the transience time exhibits cutoff. 
Moreover, Massouli\'{e} \cite{Brune24} shows cutoff and pre-cutoff for the FEP on the circle in different regimes, confirming this conjecture. }

\end{remark}

%%%%%%%%%%%%%%%%%%%%%%%%%%%%%%%%%%%%%%%%%%
%%%%%%%%%%%%%%%%%%%%%%%%%%%%%%%%%%%%%%%%%%

\section{FEP on the segment}
\label{sec:fepseg}
%%%%%%%%%%%%%%%%%%%%%%%%%%%%%%%%%%%%%%%%%%
%%%%%%%%%%%%%%%%%%%%%%%%%%%%%%%%%%%%%%%%%%

The intuitive idea we will use throughout the rest of the section is that particle--hole objects in the FEP move like extended exclusion objects of size two, see Figure \ref{fig:key}. 

\begin{figure}[tb]
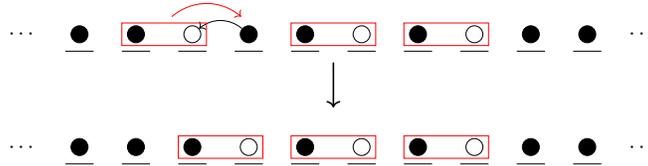

    \centering
    \scalebox{0.8}{\ideakey}
    \caption{In the above figure black circles denote particles and white circles denote holes. Particle--hole objects are placed in red boxes. In the dynamics, the third leftmost particle in the top figure jumps left, which corresponds to the leftmost particle--hole object jumping one space to the right. In particular, the boxes do not overlap in the dynamics and particle--hole objects behave like particles in the simple exclusion process.}
    \label{fig:key}
\end{figure}
This behaviour was observed in Gabel et al.\ \cite{PhysRevLett.105.210603}. It was exploited in
Baik et al.\ \cite{Jinho2018} to couple a TAFEP to a totally asymmetric simple exclusion process (TASEP) on both the half--line and $\mathbb{Z}$. 
Similarly, Ayyer et al.\ \cite[Theorem 4.3]{ayyer2020stationary} couple the AFEP with the ASEP on $\mathbb{Z}$ with this approach.
Exclusion models with particles of size greater than one have also attracted independent attention in the literature \cite{Alcarazexact,Alcarazanom,Lakatos_2003,macdonald69,Shaw2003}.

%%%%%%%%%%%%%%%%%%%%%%%%%%%%%%%%%%%%%%%%%%
%%%%%%%%%%%%%%%%%%%%%%%%%%%%%%%%%%%%%%%%%%

\subsection{Mapping to lattice paths} \label{sec:lattice}

%%%%%%%%%%%%%%%%%%%%%%%%%%%%%%%%%%%%%%%%%%
%%%%%%%%%%%%%%%%%%%%%%%%%%%%%%%%%%%%%%%%%%

In this section we introduce a mapping to lattice paths for the FEP on the segment. 
For any $\xi\in\Omega_{N,k}$ we label particles from left to right and let $x_{i}$ denote the position of the $i$--th leftmost particle for $i=1,\ldots,k$. 
We recursively define a path $\eta^{\xi}$ in $\mathbb{Z}^{2}$ as follows: set $\eta^{\xi}(1)=2(x_{1}-1)$ and for each $i\in[k-1]$
\begin{equation}\label{eq:lpath1}
\eta^{\xi}(i+1)-\eta^{\xi}(i) = 2( x_{i+1}-x_{i}-1)- 1\,,
\end{equation}
equivalently $\eta^{\xi}(i) = 2x_i -3i+1$.
See Figure \ref{fig:generalconfig} for an example of a lattice path. 

The mapping $\xi\mapsto\eta^{\xi}$ is injective and therefore bijective onto its image.  
We write $\overline{\Omega}_{N,k}$ for the image set of $\xi\mapsto\eta^{\xi}$. 
Similarly, we write $\overline{\mathcal{E}}_{N,k}$ for the image set of FEP configurations in the ergodic component.

For configurations $\xi\in\mathcal{E}_{N,k}^{c}$, the lattice path may include `steep' up--segments where $\eta^{\xi}(i+1)-\eta^{\xi}(i)>1$ for $i=1,\ldots,k-1$. See, for example, the segment between $\eta^{\xi}(7)$ and $\eta^{\xi}(8)$ in Figure \ref{fig:fullcouple}. Crucially, the dynamics cannot create steep segments, and steep segments cannot be made more steep. Steep segments are made less steep when the particles in the FEP make jumps that are not reversible, such as those indicated for particles $7$ and $8$ in Figure \ref{fig:fullcouple}. The left and right endpoints of the lattice path may only move down and up, respectively. Once the left endpoint reaches $0$ it cannot move any further, and once the right endpoint reaches $2N-3k+1$ it cannot move any further. This is because the particles $1$ and $k$ in the FEP move left and right, respectively, until they become stuck at the endpoints.

\begin{figure}[tb]
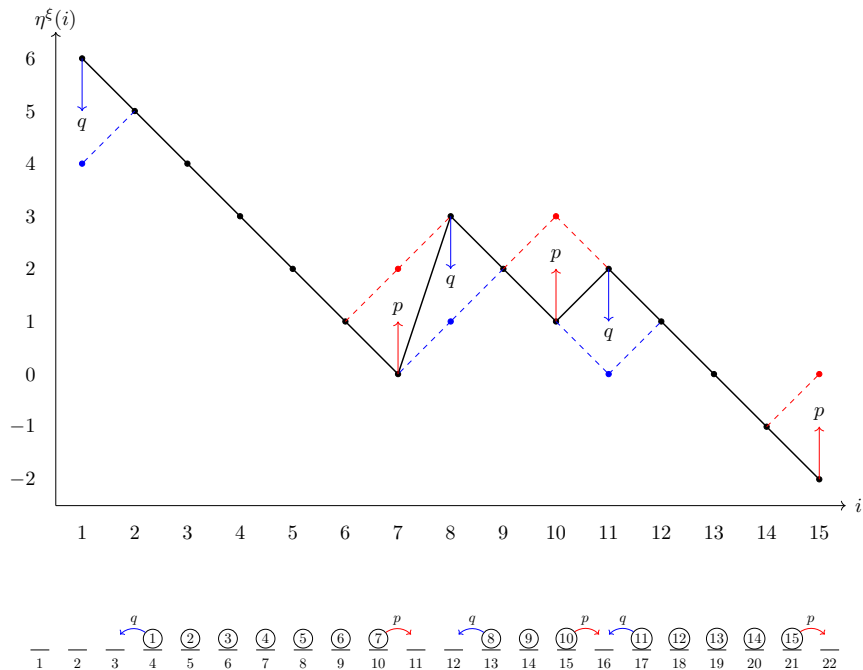

\centering
{\scalebox{0.7}{\fullcouple}} \\[5ex]
{\scalebox{0.65}{\gconfig}}
\caption{\label{fig:fullcouple} Top: The black path gives the lattice path $\eta^{\xi}$ for the configuration $\xi\in\Omega_{22,15}$ drawn below. The red and blue arrows give the only allowed transitions in the lattice path dynamics. These correspond to the clock rings (left to right) $\mathcal{T}_{(1,6)}^{\downarrow}$, $\mathcal{T}_{(7,0)}^{\uparrow}$, $\mathcal{T}_{(8,3)}^{\downarrow}$, $\mathcal{T}_{(10,1)}^{\uparrow}$, $\mathcal{T}_{(11,2)}^{\downarrow}$ and $\mathcal{T}_{(15,-2)}^{\uparrow}$ respectively in the graphical construction. The dashed lines indicate the changes to the lattice path in the dynamics. Bottom: A configuration $\xi\in\Omega_{22,15}$. The $i$--th particle from the left is labelled $i$, and the sites are labelled below. The red and blue arrows indicate all the possible transitions in the FEP dynamics for $\xi$ .}
\label{fig:generalconfig}
\end{figure}

We now introduce a graphical construction for the dynamics on lattice paths. 
This construction is similar to the construction found in Lacoin \cite[Section 8.1]{lacoinsegment}. 
For all $N\geq1$ and $k\geq N/2$ this graphical construction allows us to couple the trajectories $(\eta_{t}^{\xi})_{t\geq0}$ starting from all initial configurations $\xi\in\Omega_{N,k}$, and makes explicit the connection with the exclusion processes with various boundary conditions. Importantly, the construction on lattice paths is also monotone, conserving the natural partial order (discussed further in the following section).

To each site $(i,y)\in \mathbb{Z}_+ \times \mathbb{Z}$ we attach independent Poisson processes (clocks) $\mathcal{T}_{(i,y)}^{\uparrow}$ and $\mathcal{T}_{(i,y)}^{\downarrow}$ of rates $p$ and $q$, respectively.  
From these, and for $\xi\in\Omega_{N,k}$, we deterministically construct a trajectory $(\eta_{t}^{\xi})_{t\geq 0}$ with initial condition $\eta^{\xi}$. Then, $(\eta_{t}^{\xi})_{t\geq 0}$ is the unique, right-continuous, $\overline\Omega_{N,k}$ valued, function which equals $\eta^\xi$ at time zero, is constant outside of $\bigcup_{(i,y)\in\mathbb{Z}_+ \times \mathbb{Z}}\mathcal{T}_{(i,y)}^{\uparrow} \cup \mathcal{T}_{(i,y)}^{\downarrow}$ and evolves according to the following rules.

\noindent If $\mathcal{T}_{(i,y)}^{\uparrow}$ rings at time $t$, then:
\begin{itemize}
    \item if $1<i<k$, $\eta_{t-}^{\xi}(i)=y$ and $\eta_{t-}^{\xi}$ has a local minimum at $x$ then $\eta_{t}^{\xi}(i)=y+2$ and all other coordinates of the lattice path remain unchanged; 
    \item if $i=k$, $y<2N-3k+1$, $\eta_{t-}^{\xi}(i)=y$ and $\eta_{t-}^{\xi}$ has a local minimum at $i$ then $\eta_{t}^{\xi}(x)=y+2$ and all other coordinates of the lattice path remain unchanged; 
    \item otherwise no changes are made to the lattice path at time $t$. 
\end{itemize}
If $\mathcal{T}_{(i,y)}^{\downarrow}$ rings at time $t$, then
\begin{itemize}
    \item if $1<i<k$, $\eta_{t-}^{\xi}(i)=y$ and $\eta_{t-}^{\xi}$ has a local maximum at $i$ then $\eta_{t}^{\xi}(i)=y-2$ and all other coordinates of the lattice path remain unchanged; 
    \item if $i=1$, $y>0$, $\eta_{t-}^{\xi}(i)=y$ and $\eta_{t-}^{\xi}$ has a local maximum at $i$ then $\eta_{t}^{\xi}(i)=y-2$ and all other coordinates of the lattice path remain unchanged; 
    \item otherwise no changes are made to the lattice path at time $t$. 
\end{itemize}
We will denote the law of this construction by $\mathbb{P}$. 
To see that this construction gives the Markov chain with the generator in \eqref{eq:FEPgenseg}, observe that a local maximum in the lattice path will occur at $i\in\{2,\ldots,k-2\}$ if and only if the positions of the particles satisfy $x_{i+1}=x_{i}+1$ and $x_{i-1}<x_{i}-1$. 
Similarly, a local minimum will occur at $i\in\{2,\ldots,k-2\}$ if and only if $x_{i-1}=x_{i}-1$ and $x_{i+1}>x_{i}+1$. 
A local maximum occurs at $i=1$ and $\eta^{\xi}(1)>0$ if and only if $x_{1}>1$ and $x_{2}=x_{1}+1$. A local minimum occurs at $i=k$ and $\eta^{\xi}(k)<2N-3k+1$ if and only if $x_{k}<N$ and $x_{k-1}+1=x_{k}<N$.

Observe that, for the FEP lattice path dynamics on $\overline\Omega_{N,k}$, the left and right endpoints are only allowed to move downward and upward, respectively, in the dynamics. 
Also, the left boundary is at least zero, and the right boundary is at most $2N - 3k + 1$. 
This is due to irreversible jumps in the dynamics towards the boundaries of the segment, see Figure \ref{fig:generalconfig}. 
Also, we observe that 
$$
\overline{\mathcal{E}}_{N,k} = \{\eta \colon \eta(1) =0,\ \eta(k)=2N-3k+1, \eta(x+1)-\eta(x)\in\{-1,1\}\, \forall x\in [k-1]\}
$$
Moreover, the above construction allows us to couple the dynamics of the FEP on segments of different sizes and with different numbers of particles.  

\begin{remark} \label{rmk:FEPandsep}
By comparing the graphical construction $\mathbb{P}$ to the graphical construction given in \cite[Section 8.1]{lacoinsegment}, we observe that the FEP with parameter $p$ restricted to the ergodic component $\mathcal{E}_{N,k}$, i.e.\ the lattice paths with no `steep' segments, is equivalent to a SEP with right jump rate $q$ and left jump rate $p$ on the segment $[k-1]$ with $N-k$ particles. 
The explicit construction of the SEP, $(\sigma_t)_{t\geq 0}$, on $\Omega_{k-1,N-k}$ from a height function $(\eta_t)_{t\geq 0}$ on $\overline{\mathcal{E}}_{N,k}$ is given by
\begin{align*}
    \sigma_t(x) = \frac{1}{2}( \eta_t(x+1)-\eta_t(x) +1 ) \,,
\end{align*}
for $x \in [k-1]$.
\end{remark}

%%%%%%%%%%%%%%%%%%%%%%%%%%%%%%%%%%%%%%%%%%
%%%%%%%%%%%%%%%%%%%%%%%%%%%%%%%%%%%%%%%%%%

\subsection{Monotonicity}

%%%%%%%%%%%%%%%%%%%%%%%%%%%%%%%%%%%%%%%%%%
%%%%%%%%%%%%%%%%%%%%%%%%%%%%%%%%%%%%%%%%%%

In this section we show that the graphical construction given in Section \ref{sec:lattice} preserves a partial order for the lattice paths.
The lattice paths admit a natural partial order; for two lattice paths $\eta,\eta'\in \overline{\Omega}_{N,k}$ we say that $\eta\leq\eta'$ if
\begin{equation} \label{eq:partial1}
    \eta(i)\leq\eta'(i)\mbox{ for all }i\in[k]. 
\end{equation}
Let $\eta=\eta^{\xi_{1}}$ and $\eta'=\eta^{\xi_{2}}$ for two particle configurations $\xi_{1},\xi_{2}\in\Omega_{N,k}$, and let $x^{1}_{i}$ and $x^{2}_{i}$ denote the positions of the $i^\mathrm{th}$ leftmost particles in $\xi_{1}$ and $\xi_{2}$ respectively. 
It is straightforward to see from \eqref{eq:lpath1} that
\eqref{eq:partial1} is equivalent to $x_{i}^{1}\leq x_{i}^{2}$ for all $i\in[k]$.   

The following proposition states that the partial order \eqref{eq:partial1} is conserved under the coupling $\mathbb{P}$, i.e.\ the lattice path dynamics are monotone. 
\begin{prop} \label{prop:partial}
Let $\eta,\eta'\in\overline{\Omega}_{N,k}$ be two lattice paths with $\eta\leq\eta'$ then the lattice path trajectories $(\eta_{t})_{t\geq0}$ and $(\eta_{t}')_{t\geq0}$ satisfy
\begin{equation} \label{eq:partial2}
    \mathbb{P}\bigl(\eta_{t}\leq\eta_{t}'\bigr)=1,
\end{equation}
for all $t\geq0$.
\end{prop}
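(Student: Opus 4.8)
The plan is to prove the order is preserved for the coupled pair $(\eta_t,\eta_t')$ driven by the \emph{same} clocks $\mathcal{T}_{(i,y)}^{\uparrow},\mathcal{T}_{(i,y)}^{\downarrow}$, and to reduce everything to a single clock ring. Although clocks are attached to all of $\mathbb{Z}_+\times\mathbb{Z}$, only a clock at a site actually occupied by the path (i.e.\ with $\eta_{t-}(i)=y$) can act; since the coordinate runs over $[k]$ and the occupied heights stay in a bounded range, almost surely only finitely many relevant clocks ring in any finite interval and no two ring simultaneously. Hence it suffices to show: if $\eta_{t-}\leq\eta_{t-}'$ and exactly one clock rings at time $t$, then $\eta_t\leq\eta_t'$. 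As only the single coordinate $i$ of the ringing clock can change (for either path), I only need to check $\eta_t(i)\leq\eta_t'(i)$; the order at every other coordinate is untouched.

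First I record a parity observation from the identity $\eta^{\xi}(i)=2x_i-3i+1$ in \eqref{eq:lpath1}: the value $\eta(i)$ has a parity depending only on $i$, so any two lattice paths satisfy $\eta(i)\equiv\eta'(i)\pmod 2$. Consequently $\eta_{t-}(i)<\eta_{t-}'(i)$ forces $\eta_{t-}'(i)\geq\eta_{t-}(i)+2$, and each admissible move changes a coordinate by exactly $\pm2$. With this in hand I then run the case analysis for an upward clock $\mathcal{T}_{(i,y)}^{\uparrow}$, writing $a=\eta_{t-}(i)\leq b=\eta_{t-}'(i)$. If $a\neq y$ and $b\neq y$, neither moves. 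If exactly one equals $y$, the parity bound ($b\geq a+2$ when $a=y<b$, and $a\leq y-2$ when $a<y=b$) shows the gap only widens or stays, so order is kept.

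The only delicate case is $a=b=y$, where the danger is that the lower path $\eta$ rises by $2$ while the upper path $\eta'$ does not. This is excluded by the key \textbf{inheritance property}: if $\eta\leq\eta'$ and $\eta(i)=\eta'(i)$, then a local minimum of $\eta$ at $i$ is also a local minimum of $\eta'$ at $i$. Indeed $\eta(i\pm1)>\eta(i)=\eta'(i)$ together with $\eta(i\pm1)\leq\eta'(i\pm1)$ yields $\eta'(i\pm1)>\eta'(i)$; so whenever $\eta$ is eligible to move up, so is $\eta'$, and both rise to $y+2$. The downward clock $\mathcal{T}_{(i,y)}^{\downarrow}$ is handled by the mirror argument, the dangerous configuration now being $\eta'$ descending while $\eta$ stays, which is ruled out by the dual statement: if $\eta\leq\eta'$ and $\eta(i)=\eta'(i)$, a local maximum of $\eta'$ at $i$ is also a local maximum of $\eta$ at $i$, since $\eta(i\pm1)\leq\eta'(i\pm1)<\eta'(i)=\eta(i)$. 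The boundary rules at $i=1$ and $i=k$ carry the same height $y$ for both paths, so the thresholds $y>0$ and $y<2N-3k+1$ fire simultaneously and create no extra cases.

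The main obstacle is exactly this equal-height case and its resolution through the inheritance property; the reduction to a single ring, the parity gap, and the one-sided cases are routine bookkeeping. Combining the single-ring step with the facts that $\eta_0\leq\eta_0'$ and the trajectories are piecewise constant between rings gives $\mathbb{P}(\eta_t\leq\eta_t')=1$ for all $t\geq0$.
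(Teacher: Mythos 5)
Your proof is correct and follows essentially the same route as the paper's: reduce to a single clock ring, use the parity of $\eta(i)$ to handle unequal heights, and resolve the equal-height case via the inheritance of local extrema (the paper phrases your three cases as ``not a local minimum'', ``$\eta_{t-}(i)\leq\eta_{t-}'(i)-2$'', and ``equal heights'', which is the same case analysis). Your explicit treatment of the downward clocks, the boundary thresholds at $i=1$ and $i=k$, and the almost-sure absence of simultaneous rings only makes explicit what the paper leaves as ``a symmetric argument''.
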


The proof of Proposition \ref{prop:partial} is similar to the proof of \cite[Proposition 3.1]{lacoinsegment}.

\begin{proof}[Proof of Proposition \ref{prop:partial}]
Fix $\eta,\eta'\in\overline{\Omega}_{N,k}$ to be two lattice paths with $\eta\leq\eta'$.
It is sufficient to check that the partial order is conserved on the clock rings in the graphical construction. 
Suppose that $\eta_{t-}\leq\eta_{t-}'$ and there is a clock ring in the graphical construction at time $t$. 
Assume, wlog, that $\mathcal{T}^{\uparrow}_{(i,y)}$ rings at time $t$ for some $2\leq i\leq k$ and $y \in \mathbb{Z}$ (a symmetric argument applies for $t$ in $\mathcal{T}^{\downarrow}_{(i,y)}$). 
%If $y \not\in\{ \eta_{t-}(i),\eta_{t-}'(i)\}$ then both configurations are unchanged at $t$.
%If $y = \eta_{t-}'(i) > \eta_{t-}$, and the height of the path can only increase at times in $\mathcal{T}^{\uparrow}_{(i,y)}$, the order is conserved. 
%It remains to check the case $y=\eta_{t-}(i)$.

If $\eta_{t-}(i)$ is not a local minimum of $\eta_{t-}$, then no change is made to $\eta_{t}$ and the partial order must hold at time $t$, since $T^{\uparrow}_{(i,y)}$ cannot make coordinates of the lattice path $\eta_{t-}'$ smaller. Henceforth we assume that $\eta_{t-}(i)$ is a local minimum.

If $\eta_{t-}(i)\leq\eta_{t-}'(i)-2$ then $\eta_{t}(i)\leq\eta_{t}'(i)$ since the clock ring $T^{\uparrow}_{(i,y)}$ can increase the height in both lattice paths by at most $2$. Hence, $\eta_{t}\leq\eta_{t}'$.

Since the $i^\mathrm{th}$ coordinates in both lattice paths have the same odd/even parity, the only remaining case is $\eta_{t-}(i)=\eta_{t-}'(i)$, and the lattice paths can only change if $\eta_{t-}(i)=\eta_{t-}'(i)=y$. 
By assumption $\eta_{t-}(i)=y$ is a local minimum and we have $\eta_{t-}(i-1)=y+1$. 
Moreover, it holds that $\eta_{t-}(i+1)>y$. By the partial order at time $t-$ we have that $\eta_{t-}'(i-1)\geq\eta_{t-}(i-1)$ and $\eta_{t-}'(i+1)\geq\eta_{t-}(i+1)$.
Therefore, $\eta_{t-}'(i)$ must also be a local minimum, and hence $\eta_{t}(i)=\eta_{t}'(i)=y+2$. 
No other coordinates are changed in the lattice paths, and \eqref{eq:partial2} holds at time $t$. 
\end{proof}

We now define some special configurations which are maximal and minimal with respect to the partial order, firstly on the full state space and secondly on the ergodic component.
See Figure \ref{fig:special} for an example of the lattice paths from Definition \ref{def:specialpaths}.

\begin{definition} \label{def:specialpaths}
Let $\eta^{-}$ and $\eta^{+}$ denote the minimal and maximal lattice paths on $\overline{\Omega}_{N,k}$ w.r.t.\ the partial order \eqref{eq:partial1}. 
The minimal and maximal lattice paths correspond to FEP configurations, on $\Omega_{N,k}$, given by
\begin{align} \label{eq:minmaxheight}
\xi^{-}(x)=\mathds{1}(x\leq k), \quad \mbox{and} \quad
\xi^{+}(x)=\mathds{1}(x\geq N-k+1),
\end{align}
respectively.

Let $\eta^{\vee}$ and $\eta^{\wedge}$ denote the minimal and maximal configurations w.r.t.\ the partial order restricted to the ergodic component $\overline{\mathcal{E}}_{N,k}$\footnote{This notation is chosen to be consistent with \cite{lacoinsegment}, i.e.\  
the notation ${\vee}$ and ${\wedge}$ is chosen to represent the shapes of the corresponding lattice paths.}.
The lattice paths $\eta^{\vee}$ and $\eta^{\wedge}$ correspond to the FEP configurations
$$
\xi^{\wedge}(x)=\mathds{1}\bigl(x\notin2\mathbb{Z}\cap[1,2N-2k]\bigr)\quad \mbox{and} \quad
\xi^{\vee}(x)=\xi^{\wedge}(N+1-x),
$$
respectively.  
\end{definition}

\begin{figure}[tbh]
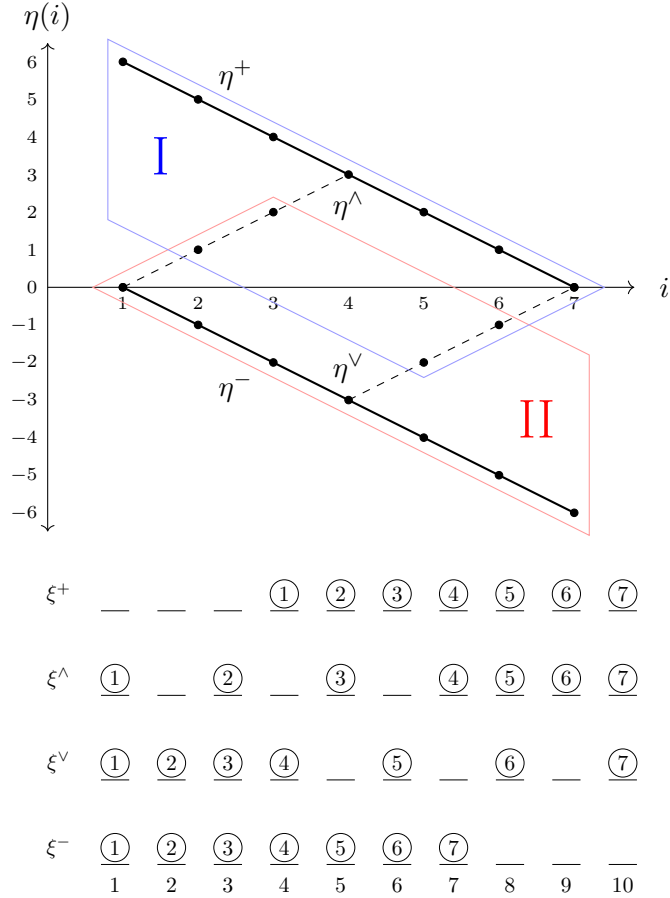

\centering
\spconfigsnew\\[3ex]
\scalebox{0.75}{\specconfigstwo}
\caption{\label{fig:special} Top: The lattice paths $\eta^{+},\eta^{-},\eta^{\wedge}$ and $\eta^{\vee}$ on $\overline{\Omega}_{10,7}$. The upper (resp. lower) dashed line indicates the part of the lattice path $\eta^{\wedge}$ (resp. $\eta^{\vee}$) not contained in $\eta^{+}$ (resp. $\eta^{-}$). The region $\RN{1}$ (resp. $\RN{2}$) bounded by the blue (resp. red) quadrilateral contains all lattice paths $\eta\in\overline{\Omega}_{N,k}$ with $\eta(1)>0$ (resp. $\eta(k)<2N-3k+1$) which may be reached from $\eta^{+}$ (resp. $\eta^{-}$) in the dynamics. On this region the FEP are coupled with the OBEP height function, see Remark \ref{rmk:OBEPisFEP}. Bottom: The corresponding configurations $\xi^{+},\xi^{\wedge},\xi^{\vee}$ and $\xi^{-}$ on the state space $\Omega_{10,7}$ with particles labelled $i=1,\ldots,7$ from left to right. }
\end{figure}

%%%%%%%%%%%%%%%%%%%%%%%%%%%%%%%%%%%%%%%%%%
%%%%%%%%%%%%%%%%%%%%%%%%%%%%%%%%%%%%%%%%%%

\subsection{Coupling with exclusion processes}
\label{sec:coupleSEP}
%%%%%%%%%%%%%%%%%%%%%%%%%%%%%%%%%%%%%%%%%%
%%%%%%%%%%%%%%%%%%%%%%%%%%%%%%%%%%%%%%%%%%

In this section we observe how the FEP may be coupled to the SEP on the segment and the open boundary exclusion process (OBEP).
The couplings are on the probability space defining the graphical construction of the FEP in Section \ref{sec:lattice}.

We adopt the same notation as in Section \ref{sec:notation}. 
We consider the SEP on the interval $[n]=\{1,2,\ldots,n\}$, with $m \leq n$ particles and state space $\Omega_{n,m}$. 
In the SEP, the extra constraint of needing a particle `behind' any particle that moves is absent. 
A particle at site $x$ attempts to jump right to site $x+1$ at rate $q\in(0,1)$, only doing so if $x<n$ and $x+1$ is not occupied by another particle. 
A particle at site $x$ attempts to jump left at rate $p=1-q$, only doing so if $x>1$ and $x-1$ is not occupied by another particle (note that the roles of $p$ and $q$ have swapped with respect to the FEP due to the coupling discussed previously). 
The generator of the SEP acts on observables by
\begin{align}
\begin{split} \label{eq:Lsep}
    \mathcal{L_\mathrm{ex}}f(\xi)=&\sum_{x=1}^{n-1}q\xi(x)\bigl(1-\xi(x+1)\bigr)\bigl[f(\xi^{x,x+1})-f(\xi)\bigr]  \\
    &+\sum_{x=2}^{n}p\xi(x)\bigl(1-\xi(x-1)\bigr)\bigl[f(\xi^{x,x-1})-f(\xi)\bigr].
\end{split}
\end{align}

For the OBEP, we allow for particle creation and annihilation at two endpoints of the segment. 
In this case the particle number is no longer necessarily conserved, and we consider the process on state space 
$$
\Omega_{n}:=\{0,1\}^{n}.
$$
In the OBEP, we interpret the endpoints of the segment as being attached to infinite reservoirs that input and remove particles. 
For boundary rates $\alpha,\beta,\gamma,\delta \geq 0$, we think of site $1$ as being attached to an infinite reservoir of particles that attempts to enter particles into site $1$ with rate $\alpha\geq0$, only doing so if there is a hole at site $1$, and attempts to remove particles from site $1$ with rate $\gamma\geq0$, only doing so if there is a particle at site $1$. 
Similarly, the site $n$ is attached to an infinite reservoir which inputs and removes particles at the site $n$ with rates $\delta\geq0$ and $\beta\geq0$ respectively. 
The OBEP with parameters $(q,\alpha,\beta,\gamma,\delta)$ is the process generated by
\begin{align}
\begin{split} \label{eq:Lobep}
    \mathcal{L_\mathrm{oex}}f(\xi)=& \mathcal{L_\mathrm{ex}}f(\xi) +\Big(\alpha\big(1-\xi(1)\big) + \gamma\xi(1)\Big)\Big(f(\xi^1)-f(\xi)\Big) \\
    &+\Big(\delta\big(1-\xi(n)\big) + \beta\xi(n)\Big)\Big(f(\xi^n)-f(\xi)\Big)\,,
\end{split}
\end{align}
where $\xi^x$ denotes the configuration obtained from $\xi$ by flipping the occupation value only at site $x$. See Figure \ref{fig:obepagain}.
Notice that the OBEP generalises the SEP; the latter can be recovered by taking $\alpha=\beta=\gamma=\delta=0$.

\begin{figure}[tb]
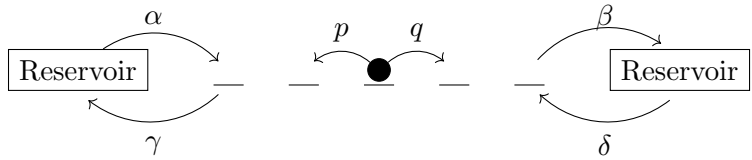

\centering
\obep
\caption{Transition rates for the $(q,\alpha,\beta,\gamma,\delta)$ OBEP on $\Omega_{5}$.}
\label{fig:obepagain}
\end{figure}

\subsubsection*{Exclusion process graphical construction}
We now give a graphical construction of the OBEP on $\Omega_{n}$ with parameters $(q,0,0,0,p)$, i.e. with closed left boundary and particle injection with rate $p$ on the right boundary, in terms of lattice paths. 
For each $\zeta\in\Omega_{n}$, let $h^{\zeta}(1)=0$ and for each $x\in[n]$ set 
\begin{align*}
h^{\zeta}(x+1) - h^{\zeta}(x) = 2\zeta(x)-1 = \begin{cases}
    +1 & \textrm{if $\zeta(x) =1$},\\
    -1 &  \textrm{if $\zeta(x) = 0$}\,.
\end{cases}
\end{align*}
Recall from Section \ref{sec:lattice},
at each site $(i,y)\in \mathbb{Z}_+ \times \mathbb{Z}$ we attach independent Poisson clocks $\mathcal{T}_{(i,y)}^{\uparrow}$ and $\mathcal{T}_{(i,y)}^{\downarrow}$ of rates $p$ and $q$, respectively.  
For $i \in \{2,\ldots,n+1\}$, if $\mathcal{T}_{(i,y)}^{\uparrow}$ rings at time $t$, then 
\begin{itemize}
    \item if $h^{\zeta}_{t-}(i)=y$ and $h^{\zeta}_{t-}$ has a local minimum at $i$ then $h^{\zeta}_{t}(i)=y+2$ and all other coordinates remain unchanged; 
    \item otherwise no changes are made to the lattice path at time $t$. 
\end{itemize}
If $\mathcal{T}_{(i,y)}^{\downarrow}$ rings at time $t$, then 
\begin{itemize}
    \item if $h^{\zeta}_{t-}(i)=y$ and $h^{\zeta}_{t-}$ has a local maximum at $i$ then $h^{\zeta}_{t}(i)=y-2$ and all other coordinates remain unchanged; 
    \item otherwise no changes are made to the lattice path at time $t$. 
\end{itemize}
Recall that we denote the law of the graphical construction by $\mathbb{P}$. 

It is straightforward to check that this gives a construction of the dynamics of the OBEP with parameters $(q,0,0,0,p)$.

\begin{remark} \label{rmk:OBEPisFEP}
By the above graphical construction, we observe that the FEP $(\eta_{t}^{-})_{t\geq0}$, on $\Omega_{N,k}$, is equivalent to an OBEP on $\Omega_{k-1}$ with parameters $(q,0,0,0,p)$ and an empty initial condition, until $N-k$ particles have entered at the right boundary of the OBEP (until the first time $\eta_{t}^{-}(k) = 2N-3k+1$). In particular, they can be coupled using clock rings in region $\RN{2}$ of Figure \ref{fig:special}.
That is, $(\eta_{t}^{-})_{t\geq0}$ is equivalent to an OBEP with a finite reservoir containing $N-k$ particles attached to the site $k-1$. 
After this time, the lattice path dynamics of the FEP are equivalent to a closed exclusion process, i.e. with $\alpha=\beta=\gamma=\delta=0$, see Remark \ref{rmk:FEPandsep}. 

Similarly, by interpreting `down-slopes' as holes and `up-slopes' as particles, the FEP $(\eta_{t}^{+})_{t\geq0}$ is equivalent to an OBEP on $\Omega_{k-1}$ with parameters $(q,q,0,0,0)$ and the empty initial condition until the first time $N-k$ particles have entered the left boundary. 
\end{remark}

In light of the previous remark we introduce notation for the first time that $\ell$ particles have entered the  OBEP on $\Omega_{k-1}$ with empty initial condition and parameters $(q,0,0,0,p)$,
\begin{align}
\label{eq:obepentrytime}
\tau_{\ell}=\inf\bigl\{t\geq0:h_{t}^{\boldsymbol{0}}(k) \geq 1-k + 2\ell \bigr\},
\end{align}
note that particles entered the OBEP with $(q,0,0,0,p)$ at the right boundary only.
In particular, by the Remark \ref{rmk:OBEPisFEP},
\begin{equation} \label{eq:rightboundaryarrive}
\mathbb{P}(\eta_{t}^{-}\notin\overline{\mathcal{E}}_{N,k})=\mathbb{P}(\tau_{N-k}>t).
\end{equation}

\subsection{Mapping to zero--range process}
\label{sec:coupleZRP}

The following mappings of the FEP dynamics to the zero-range process (ZRP) have been used several times in the literature, see for example \cite{PhysRevE.79.041143,blondel2020,ESZ} and references therein.

To each FEP configuration $\xi\in\Omega_{N,k}$ we associate a zero-range configuration $\Pi[\xi]\in \mathbb{N}_0^{N-k+1}$ as follows. 
Label the positions of the holes (empty sites) in $\xi$ by $0=y(0)<y(1)<y(2)<\ldots<y(N-k)<y(N-k+1)=N+1$, where $y(1)$ is the first empty site to the right of site $1$ (included). Then for $i \in [N-k+1]$,
\begin{align*}
\Pi[\xi](i) = y(i)-y(i-1) - 1\,,
\end{align*}
i.e. the number of particles at site $i$ of the configuration $\Pi[\xi]$ is equal to the number of particles between the $(i-1)^{\textrm{th}}$ and $i^{\textrm{th}}$ empty site in $\xi$ (where we consider the left and right boundary to contain empty sites).
It is straightforward to check that $(\omega_t^\xi)_{t\geq0} = (\Pi[\xi_t])_{t\geq 0}$ defines a Markov process on $\mathbb{N}_0^{N-k+1}$ with generator acting on test functions $f$ by
\begin{align}
\begin{split} \label{eq:Lzrp}
    \mathcal{L_\mathrm{ZR}}f(\omega)=&\sum_{i=1}^{N-k}p\1_{\{\omega(i)\geq 2\}}\bigl[f(\omega^{i,i+1})-f(\omega)\bigr]  +\sum_{x=2}^{N-k+1}q\1_{\{\omega(i)\geq 2\}}\bigl[f(\omega^{i,i-1})-f(\omega)\bigr]\,,
\end{split}
\end{align}
where
$$
\omega^{i,j}(z)=
\begin{cases}
    \omega(z)-1&\mbox{ if }z=i,\\
    \omega(z)+1&\mbox{ if }z=j,\\
    \omega(z)&\mbox{ otherwise}.
\end{cases}
$$
For the FEP on the closed segment, the function $\Pi$ is injective, and the ZRP `picture' is simply another interpretation of an equivalent Markov process. 
This is not the case on the circle, but a similar and still useful correspondence still holds.

For the FEP on a circle, see \eqref{eq:genFEPcirc}, we consider a similar mapping as above, however, we now define the ZRP with respect to the location of a tagged hole in the FEP.
For an FEP configuration, $\xi \in \Omega_{N,k}^\circ$, and a tagged hole at site $y(0)$ in $\xi$ we label the position of the remaining holes in clockwise order $y(1),y(2),\ldots,y(N-k-1)$.
This defines a unique ZRP configuration $\Pi^{\circ}[\xi,y(0)] \in \mathbb{N}_0^{\mathbb{T}_{N-k}}$ through
\begin{align*}
\Pi^{\circ}[\xi,y(0)](i) = y(i+1)-y(i) - 1 \ (\mathrm{mod}\ N-k)\,,
\end{align*}

For the dynamics of the corresponding ZRP, given an initial condition of the FEP, $\xi \in \Omega_{N,k}^\circ$, we tag the first hole to the right of the site $0$ and call the location of this empty site $y(0)$. 
The positions of the remaining holes are labeled in clockwise order $y(0) < y(1)<\ldots<y(N-k-1)$.
The position of the tagged hole under the dynamics $(\xi_t)_{t\geq 0}$ is given by $y_t(0)$ and the remaining holes maintain their label in clockwise order. Note that, for $t>0$, $y_t(0)$ is not necessarily the first hole to the right of $0$.
It is straightforward to check that $(\omega_t^\xi)_{t\geq0}$ given by
\begin{align*}
\omega^\xi_t =\Pi^{\circ}[\xi_t,y_t(0)]\,,
\end{align*}
 defines a Markov process on $\mathbb{N}_0^{\mathbb{T}_{N-k}}$ with generator acting on test functions $f$ as
\begin{equation} \label{eq:genzc}
\mathcal{L}^{\circ}_{\mathrm{ZR}} f(\omega)=\sum_{i\in\mathbb{T}_{n}}\Big(p\mathds{1}_{\{\omega(i)\geq2\}}\bigl[f(\omega^{i,i+1})-f(\omega)\bigr] + q\mathds{1}_{\{\omega(i)\geq2\}}\bigl[f(\omega^{i,i-1})-f(\omega)\bigr]\Big),
\end{equation}

Informally, the FEP $(\xi_{t})_{t\geq0}$ is coupled to the ZRP $(\omega_{t})_{t\geq0}$ so that whenever a particle jumps in $\xi_{t}$, a particle in the corresponding pile in $\omega_{t}$ jumps in the same direction. 
The dynamical constraints of the FEP correspond to a zero escape rate from a site that contains only one particle (the last particle is `trapped').  
The mapping is not one-to-one and is only defined up to the position of the hole with label $y_t(0)$;
see \cite[Section 3]{blondel2020} for further discussion.

It is clear from the construction that $\xi_t \in \mathcal{E}_{N,k}$ if and only if $\omega^{\xi}_t(x) \geq 1$ for each $x \in [N-k]$. 
This is useful for bounding the time to reach the ergodic component in both the segment and on the circle. 
This is summarised in the following lemma, which follows immediately from the construction above.

\begin{lemma}
\label{lem:ZRPhit}
    Let $(\xi_t)_{t\geq 0}$ be an FEP on the $\Omega_{N,k}$ and $(\omega^{\xi}_t)_{t\geq 0}$ the associated ZRP on $\mathbb{N}_0^{N-k+1}$. 
    Let $\tau_\mathcal{E} = \inf\{t\geq 0 \,:\, \xi_t \in \mathcal{E}_{N,k}\}$ and $\tau_{\textrm{ZR}} = \inf\{t\geq 0 \,:\, \omega^{\xi}_t(i) \geq 1 \textrm{ for each } i \in [N-k+1]\}$, then $\tau_\mathcal{E} = \tau_{\textrm{ZR}}$ a.s. . 

    Similarly, Let $(\hat\xi_t)_{t\geq 0}$ be an FEP on the $\Omega_{N,k}^\circ$ and $(\hat\omega^{\xi}_t)_{t\geq 0}$ the associated ZRP on $\mathbb{N}_0^{\mathbb{T}_{N-k}}$. 
    Let $\tau_\mathcal{G} = \inf\{t\geq 0 \,:\, \hat\xi_t \in \mathcal{G}_{N,k}\}$ and $\hat\tau_{\textrm{ZR}} = \inf\{t\geq 0 \,:\, \hat\omega^{\xi}_t(i) \geq 1 \textrm{ for each } i \in \mathbb{T}_{[N-k]}\}$, then $\tau_\mathcal{G} = \hat\tau_{\textrm{ZR}}$ a.s. . 
\end{lemma}

%%%%%%%%%%%%%%%%%%%%%%%%%%%%%%%%%%%%%%%%%%
%%%%%%%%%%%%%%%%%%%%%%%%%%%%%%%%%%%%%%%%%%

\subsection{Proofs} \label{sec:segproofs}

%%%%%%%%%%%%%%%%%%%%%%%%%%%%%%%%%%%%%%%%%%
%%%%%%%%%%%%%%%%%%%%%%%%%%%%%%%%%%%%%%%%%%

In this section we prove Theorem \ref{th:fssepsegment} and Theorem \ref{th:fasepsegment}, beginning with the upper bound in \eqref{eq:fssep1} of Theorem \ref{th:fssepsegment}.

The idea for the proof of the upper bound in \eqref{eq:fssep1} is to bound the probability that $\eta^{+}_{t}$ and $\eta^{-}_{t}$ have not coupled under $\mathbb{P}$. 
We split the coupling time into two parts: Firstly, the time until both $\eta^{+}_{t}$ and $\eta^{-}_{t}$ have arrived at the ergodic component, secondly we bound the remaining coupling time by the time it takes to couple from $\eta^\vee$ and $\eta^\wedge$. 
We bound the hitting time of the ergodic component using the coupling with the OBEP in the previous section.
Finally, since the dynamics of the lattice path on the ergodic component is equivalent to the SSEP the conclusion of the proof is a consequence of previous results \cite[Proposition 8.1]{lacoinsegment}.

\begin{proof}[Proof of the SFEP upper bound] We now prove the upper bound in \eqref{eq:fssep1}.
Let $(\eta_{t})_{t\geq 0}$ and $(\eta_{t}')_{t\geq 0}$ be two FEP processes with initial configurations $\eta,\eta'\in\overline{\Omega}_{N,k}$ (respectively). Then by Proposition \ref{prop:partial} both $\eta_{t}$ and $\eta_{t}'$ are squeezed between $\eta_{t}^{-}$ and $\eta_{t}^{+}$ under the graphical construction $\mathbb{P}$.
Therefore
$$
\mathbb{P}\bigl(\eta_{t}\neq\eta_{t}'\bigr)\leq\mathbb{P}\bigl(\eta_{t}^{-}\neq\eta_{t}^{+}\bigr)\,.
$$
Following standard reasoning (see e.g. \cite{Wilson_2004}), it is possible to bound the distance to equilibrium in terms of the coupling time of the maximal and minimal configurations.
Specifically, for all configurations $\eta\in\overline{\Omega}_{N,k}$ we have
\begin{align*}
    \bigl\lVert P_{t}^{\eta}-\mu_{N,k}\bigr\rVert_{\mathrm{TV}}&=\bigl\lVert P_{t}^{\eta}-P_{t}^{\mu_{N,k}}\bigr\rVert_{\mathrm{TV}} \leq\max_{\eta'\in\Omega_{N,k}}\bigl\lVert P_{t}^{\eta}-P_{t}^{\eta'}\bigr\rVert_{\mathrm{TV}}.
\end{align*} 
By the usual coupling bound on the total variation distance (see e.g.\ \cite[Proposition 4.7]{LevinPeresWilmer2006}), it follows that
$$
\bigl\lVert P_{t}^{\eta}-P_{t}^{\eta'}\bigr\rVert_{\mathrm{TV}}\leq\mathbb{P}\bigl(\eta_{t}\neq\eta_{t}'\bigr)\leq\mathbb{P}\bigl(\eta_{t}^{-}\neq\eta_{t}^{+}\bigr)\,.
$$ 
Combining the two inequalities establishes the following bound on the distance to equilibrium,
\begin{align}
\label{eq:coupletime}
d_{N,k}(t)\leq \mathbb{P}\bigl(\eta_{t}^{-}\neq\eta_{t}^{+}\bigr)\,.
\end{align}

Let
$$
\tau=\inf\bigl\{t\geq0:\eta_{t}^{-},\eta_{t}^{+}\in\overline{\mathcal{E}}_{N,k}\bigr\}.
$$
If $t=t_{1}+t_{2}$ then
\begin{align}
    \mathbb{P}\bigl(\eta_{t}^{-}\neq\eta_{t}^{+}\bigr)&\leq\mathbb{P}\bigl(\tau>t_{1}\bigr)+\mathbb{P}\bigl(\eta_{t}^{-}\neq\eta_{t}^{+}\;,\;\tau<t_{1}\bigr) \label{eq:upperssep1}\\[1ex]
    &\leq2\mathbb{P}\bigl(\eta_{t_{1}}^{-}\notin\overline{\mathcal{E}}_{N,k})+\mathbb{P}\bigl(\eta_{t_{2}}^{\vee}\neq\eta_{t_{2}}^{\wedge}\bigr)\,.\label{eq:upperssep2}
\end{align}
The first term in \eqref{eq:upperssep2} follows by a union bound and symmetry, and the second term follows from an application of the Strong Markov property and the partial order \eqref{eq:partial1} on $\overline{\mathcal{E}}_{N,k}$.

We now bound the first term on the RHS of \eqref{eq:upperssep2}.
Recalling Remark \ref{rmk:OBEPisFEP}, $(\eta_{t}^{-})_{t\geq0}$ is equivalent to an OBEP on $\Omega_{k-1}$ with parameters $(1/2,0,0,0,1/2)$ until the first time it hits $\overline{\mathcal{E}}_{N,k}$. 
By \eqref{eq:rightboundaryarrive}
$$
\mathbb{P}\bigl(\eta_{t}^{-}\notin\overline{\mathcal{E}}_{N,k}\bigr)=\mathbb{P}\bigl(\tau_{N-k}>t\bigr),
$$
where the right hand side is the probability at most $N-k-1$ particles have entered at the right boundary in the OBEP by time $t$.

We observe that holes (empty sites) in the OBEP perform simple symmetric random walks with reflection at the left boundary and absorption at the right boundary, see for example \cite[Section 4.1]{gantert2020mixing} for a similar argument. 

Let $(\xi_t)_{t\geq 0}$ denote the OBEP associated with $(\eta_{t}^{-})_{t\geq 0}$, i.e. with empty initial condition.
Label particle holes in $\xi_0$ from left to right by $i=1,2,\ldots,k-1$.
Let $\hat\tau_i$ denote the absorption time of the $i$--th hole at the right boundary.

Since the $i$--th hole performs a simple symmetric random walk, with reflection at the left boundary, the probability that it has not been absorbed at the right boundary by time $2k^2$ is at most $1/2$ (for example, by a standard martingale argument and Markov's inequality).
Hence, by an application of the Markov property,
\begin{equation*} 
\mathbb{P}\bigl(\hat\tau_i > 2nk^2\bigr)\leq \frac{1}{2^{n}},
\end{equation*}
for all $i\in[k-1]$. This bound does not depend on the starting position of the given hole.
It follows, by a union bound over the left most $N-k$ holes, that for each $\epsilon > 0$ there exists a $C>0$ (independent of $\epsilon$) such that 
\begin{equation}\label{eq:sseptime1}
\mathbb{P}\bigl(\tau_{N-k} > 2C k^2\log(N-k)\bigr)\leq \frac{(N-k)}{2^{C \log(N-k)}}\leq \epsilon/3 ,
\end{equation}
for $N-k$ sufficiently large.

We now bound the second term on the RHS of \eqref{eq:upperssep2}.
Fix $\epsilon>0$, then, by \cite[Proposition 8.1]{lacoinsegment} and Remark \ref{rmk:FEPandsep}, there exists a $\delta_\epsilon>0$ such that for
$$
t=\frac{k^{2}}{\pi^{2}}(1+\delta_\epsilon)\log\min(N-k,2k-N)\,.
$$
we have, for all $N$ sufficiently large,
\begin{equation} \label{eq:sseptime2}
\mathbb{P}\bigl(\eta_{t}^{\wedge}\neq\eta_{t}^{\vee}\bigr)\leq\epsilon/3\,.
\end{equation} 
Inserting \eqref{eq:sseptime1} and \eqref{eq:sseptime2} into \eqref{eq:upperssep2}, and applying \eqref{eq:coupletime},  we see that 
$$
\limsup_{N\rightarrow\infty}\frac{T_{\mathrm{seg}}^{N,k}(\epsilon)}{N^{2}\log(N-k)}\leq C_{2},
$$
for some constant $C_{2}>0$, which completes the proof. 
\end{proof}

We now prove the lower bound in Theorem \ref{th:fssepsegment}. 
Recall, by assumption the number of particles in the FEP, $k$, satisfies $N/2 < k < N$.
When the number of particles in the FEP is not too close to $N/2$ we compare the FEP on the ergodic component directly to the SSEP. 
If $k$ is very close to $N/2$, in particular in the regime $\log(2k-N)\ll\log(N-k)$, the mixing time of the associated SSEP is much smaller than the hitting time of the ergodic component. In this case the latter determines the mixing time, which we control using the comparison to the OBEP. 

\begin{proof}[Proof of SFEP lower bound] We now prove the lower bound in \eqref{eq:fssep1}.
Firstly, we restrict to considering the ergodic component
$$
d_{N,k}(t)\geq\max_{\eta\in\mathcal{E}_{N,k}}\bigl\lVert P_{t}^{\eta}-\mu_{N,k}\bigr\rVert_{\mathrm{TV}},
$$
By Remark \ref{rmk:FEPandsep}, under the coupling $\mathbb{P}$ the SFEP on $\mathcal{E}_{N,k}$ is equivalent to a SSEP on $\Omega_{k-1,N-k}$. Hence, by an application of \cite[Lemma 5]{Wilson_2004} (see also \cite[Section 7]{lacoinsegment}), it holds that
\begin{equation} \label{eq:dnklower}
d_{N,k}\Bigl(\frac{1}{\pi^{2}}k^{2}(1-\delta)\log\min(N-k,2k-N)\Bigr)\geq\epsilon,
\end{equation}
for all $\epsilon\in(0,1)$, $\delta>0$ and $N$ sufficiently large.
This is sufficient to give the lower bound in \eqref{eq:fssep1} when $\log(2k-N)$ has the same order as $\log(N-k)$. 

Assume now that $\log(2k-N)\ll\log(N-k)$. In this case the lower bound on the mixing time given by \eqref{eq:dnklower} is much smaller than the time to reach the ergodic component. 
By Remark \ref{rmk:OBEPisFEP}, it is sufficient to consider the time for $N-k$ particles to enter the OBEP on $\Omega_{k-1}$ with parameters $(1/2,0,0,0,1/2)$ and empty initial condition.
The OBEP  with parameters $(1/2,0,0,0,1/2)$ has a unique absorbing state given by the configuration which is completely filled.
This state is reached, starting from the empty configuration, at time $\tau_{k-1}$, defined in \eqref{eq:obepentrytime}. 
It follows from \cite[Lemma 3.1 and Lemma 3.3]{gantert2020mixing}, that 
for $\epsilon\in(0,1)$ and $0<\delta<(1-\epsilon)/\epsilon$ 
\begin{equation} \label{eq:fillsegment}
\mathbb{P}\Bigl(\tau_{k-1}>\frac{2}{\pi^{2}}k^{2}\log k\Bigr)\geq\epsilon(1+\delta),
\end{equation}
for all $N$ sufficiently large. 
Recall from \eqref{eq:rightboundaryarrive}, for the FEP to reach the ergodic component
we only require that $N-k$ particles have entered the OBEP started from the empty configuration. 
Then $\tau_{k-1}-\tau_{N-k}$ is the time for the remaining $2k-N-1$ particles to enter, or equivalently $2k-N-1$ holes to exit at the right boundary. 
By the argument leading to the upper bound in \eqref{eq:sseptime1}, and the strong Markov property at $\tau_{N-k}$,
\begin{equation} \label{eq:lastfillsegment}
\mathbb{P}\bigl(\tau_{k-1}-\tau_{N-k}>Ck^{2}\log(2k-N)\bigr)\leq\epsilon\delta,
\end{equation}
for all $N$ sufficiently large. 
By a union bound 
\begin{align*}
\mathbb{P}\Big(&\tau_{\mathrm{k-1}}>\frac{2}{\pi^{2}}k^{2}\log k\Big) 
\leq \\
&\mathbb{P}\Big(\tau_{k-1}{-}\tau_{N-k}>Ck^{2}\log(2k{-}N)\Big)+\mathbb{P}\Big(\tau_{N-k}>\frac{2}{\pi^{2}}k^{2}\log k{-}Ck^{2}\log(2k{-}N)\Big), 
\end{align*}
and it follows from \eqref{eq:fillsegment} and \eqref{eq:lastfillsegment} that
$$
\mathbb{P}\Bigl(\tau_{N-k}>\frac{2}{\pi^{2}}k^{2}\log k-Ck^{2}\log(2k-N)\Bigr)\geq\epsilon,
$$
for all $N$ sufficiently large. 
Noting that if $\log(N-k)\gg\log(2k-N)$, then there exists a constant $C_{1}>0$ (not depending on $\epsilon$) such that 
$
C_{1} \log(N-k)\leq\log k,
$
for all $N$ sufficiently large. 
Therefore, by \eqref{eq:rightboundaryarrive}
$$
\mathbb{P}\bigl(\eta_{C_{1}k^{2}\log(N-k)}^{-}\notin\overline{\mathcal{E}}_{N,k}\bigr)=\mathbb{P}\bigl(\tau_{N-k}>C_{1}k^{2}\log(N-k)\bigr)\geq\epsilon. 
$$
Since $d_{N,k}(t)\geq\mathbb{P}(\eta_{t}^{-}\notin\overline{\mathcal{E}}_{N,k})$ the lower bound in \eqref{eq:fssep1} follows. 
\end{proof}

We now prove the upper bound in Theorem \ref{th:fasepsegment}, for the mixing time of the AFEP on the interval. 
The idea here is that the AFEP on $\Omega_{N,k}$ with initial condition $\eta^{-}$ behaves like the reverse--bias phase for OBEP on $\Omega_{k-1}$, until the first time that $N-k$ particles have entered, and this hitting time of the ergodic set determines the mixing time. 

\begin{proof}[Proof of the upper bound for the AFEP] 
We prove the upper bound in \eqref{eq:fasep1}.
By the same reasoning in the proof of the upper bound in \eqref{eq:fssep1}, if $t=t_1+t_2$ then
\begin{equation} \label{eq:upperasep2}
\mathbb{P}\bigl(\eta_{t}^{-}\neq\eta_{t}^{+}\bigr)\leq\mathbb{P}\bigl(\eta_{t_{1}}^{\vee}\neq\eta_{t_{1}}^{\wedge}\bigr)+ \mathbb{P}\bigl(\eta_{t_{2}}^{-}\notin\overline{\mathcal{E}}_{N,k})+\mathbb{P}\bigl(\eta_{t_{2}}^{+}\notin\overline{\mathcal{E}}_{N,k}).
\end{equation}
Fix $\epsilon\in(0,1)$.
For the first term on the RHS of \eqref{eq:upperasep2} it follows from Labb\'{e} and Lacoin \cite[Section 3.4]{cutoffasep} that 
\begin{equation} \label{eq:asymcoal}
\mathbb{P}\bigl(\eta_{N^2}^{\vee}\neq\eta_{N^2}^{\wedge}\bigr)\leq\epsilon/3,
\end{equation}
for all $N$ sufficiently large. 

For the second term on the RHS of \eqref{eq:upperasep2} we apply \cite[Theorem 1.3]{gantert2020mixing}. 
\begin{equation} \label{eq:goodextreme}
\mathbb{P}\bigl(\eta_{N^2}^{-}\notin\overline{\mathcal{E}}_{N,k})\leq\epsilon/3,
\end{equation}
for all $N$ sufficiently large.
Note that in the previous two bounds, it is sufficient to take times larger than a suitable constant times $N$.

For the third term on the RHS of \eqref{eq:upperasep2} we wish to apply \cite[Theorem 1.4]{gantert2020mixing}, however the bound obtained by direct application (and Remark \ref{rmk:OBEPisFEP}) is exponential in $k$ rather than $N-k$. 
To fix this we apply the following claim:
\begin{claim} \label{claim:OBEP}
Let $T_{\mathrm{asep}}^{n}(\epsilon)$ denote the mixing time of the $(q,q,0,0,0)$--OBEP on $\Omega_{n}$. 
If
\begin{equation} \label{eq:claimOBEP}
t_{2}\geq T_{\mathrm{asep}}^{N-k}(\epsilon), \quad \textrm{then} \quad \mathbb{P}\bigl(\eta_{t_{2}}^{+}\notin\overline{\mathcal{E}}_{N,k}\bigr)\leq\epsilon. 
\end{equation}

\end{claim}
Combining \eqref{eq:upperasep2}, \eqref{eq:asymcoal}, \eqref{eq:goodextreme} and \eqref{eq:claimOBEP}, it follows that 
\begin{align*}
T_{\mathrm{seg}}^{N,k}(\epsilon)&\leq\inf\bigl\{t>0\,:\,\mathbb{P}(\eta_{t}^{-}\neq\eta_{t}^{+})\leq\epsilon\bigr\}\leq\max\bigl(N^2,T_{\mathrm{asep}}^{N-k}(\epsilon/3)\bigr),
\end{align*}
for all $N$ sufficiently large.
Applying the assumption $N-k\gg\log N$ and \cite[Theorem 1.4]{gantert2020mixing} yields 
$$
\limsup_{N\rightarrow\infty}\frac{\log T_{\mathrm{seg}}^{N,k}(\epsilon)}{N-k}\leq\log\Biggl(\frac{p}{q}\Biggr),
$$
which completes the proof (up to proving Claim \ref{claim:OBEP}).

To prove Claim \ref{claim:OBEP} we appeal to Remark \ref{rmk:OBEPisFEP} again.
The first time, $t$, that $\eta_{t}^{+}\in\overline{\mathcal{E}}_{N,k}$ is equivalent to the first time that $N-k$ particles have entered the left boundary of the corresponding
OBEP on $\Omega_{k-1}$ with parameters $(q,q,0,0,0)$ and with empty initial condition.
It is possible to couple $(q,q,0,0,0)$-OBEPs on $\Omega_{k-1}$ and on a smaller system, $\Omega_{N-k}$, using the same Poisson Process `clock-rings' attached to particles (as they enter), in such a way that the number of particles in the larger system is always at least the number in the smaller system. 
The coupling is standard, and we leave the details to the reader.
Hence, at least $N-k$ particles have entered the $(q,q,0,0,0)$-OBEP on $\Omega_{k-1}$ at the time, $\tau_\textrm{fill}$, that the OBEP on $\Omega_{N-k}$ has completely filled.
Since the $(q,q,0,0,0)$-OBEP has a unique absorbing state (all filled), it follows that $\mathbb{P}(\tau_\textrm{fill} >T_{\mathrm{asep}}^{N-k}(\epsilon)) \leq \epsilon$, as required.
\end{proof}

We now prove the lower bound in Theorem \ref{th:fasepsegment} using a coupling to the ZRP on the segment, and a bound on the hitting time of small sets due to Aldous and Brown \cite{aldousbrown}.

\begin{proof}[Proof of lower bound in \eqref{eq:fasep1}]
We will show that the time to hit the ergodic component started from `bad' initial configurations is exponentially large, which is sufficient since
\begin{align}
P_t^{\nu}(\mathcal{E}_{N,k}^c) \leq \|P_t^{\nu} - \mu_{N,k}\|_{\textrm{TV}}\leq d_{N,k}(t)\,.
\end{align}
for an initial distribution $\nu$.

Let 
$$
\mathcal{H}_{N,k}=\bigl\{\xi\in\Omega_{N,k}:\xi(1)=0,\xi(N)=1,\,\xi(x)+\xi(x+1)\geq1,\mbox{for all }x\in[N-1]\bigr\}.
$$
The configurations $\mathcal{H}_{N,k}$ are almost in the ergodic component; if the particle at $x=2$ jumps left in the dynamics the resulting configuration lies in the ergodic component. 
In particular $\mathcal{H}_{N,k}$ is the set of configurations with $\xi(1)=0,\xi(2)=1$ that may be reached by the dynamics with initial condition $\xi^{+}\in\mathcal{E}_{N,k}^{c}$ from Definition \ref{def:specialpaths}. 

Recall, from Section \ref{sec:coupleZRP}, that the FEP on the segment $[N]$ is equivalent to a ZRP on the segment $[N-k+1]$, via the map $\Pi$. 
Let $\Sigma_{n,m}$ denote the set of zero--range configurations on the segment $[n]$ with $m$ particles.
Observe that
$$
{\Pi}\bigl(\mathcal{H}_{N,k}\bigr)=\bigl\{\omega\in\Sigma_{N-k+1,k}:\omega(1)=0,\,\omega(x)\geq 1,\mbox{ for all }x\in \{2,\ldots,N-k+1\}\bigr\}, 
$$
the set of zero--range configurations in which all sites except $x=1$ are occupied by at least one particle.
By Lemma \ref{lem:ZRPhit}, the time to hit the ergodic component, $\tau_\mathcal{E}$, starting from any initial configuration in $\mathcal{H}_{N,k}$, is given by the first time a particle enters the site $1$ in the corresponding ZRP. 
Furthermore, this time is almost surely larger than the first time, $\tau_2 = \inf\{t\,:\, \omega_t(2) \geq 2\}$, that the ZRP has at least two particles on site $2$ (since particles can only enter site $1$ from site $2$ and the escape rate is only positive if the occupation of the site is greater than one).

For all times $t \leq \tau_2$, we may couple the ZRP with generator $\mathcal{L}_{\mathrm{ZR}}^{N-k+1}$ in the set $\Pi(\mathcal{H}_{N,k})$ to a constant rate ZRP in $[N-k]$, by ignoring the site $1$ and `deleting' the `stuck' particles.
More precisely, if $(\omega_t)_{t\geq 0}$ is a ZRP with generator $\mathcal{L}_{\mathrm{ZR}}^{N-k+1}$ and $\omega_0 \in \Pi(\mathcal{H}_{N,k})$, then for all $t \leq \tau_2$ it is straightforward to check that $(\tilde\omega_t)_{t\geq 0}$, defined by 
\[
\tilde\omega_t(x-1) = \omega_t(x) - 1 \quad \textrm{for} \quad x \in \{2,3,\ldots,N-k+1\},
\]
defines a ZRP with generator 
\begin{align}
\label{eq:constZRP}
    \widetilde{\mathcal{L}}_\mathrm{ZR}f(\omega)=&\sum_{i=1}^{N-k-1}p\1_{\{\omega(i)\geq 1\}}\bigl[f(\omega^{i,i+1})-f(\omega)\bigr]  +\sum_{x=2}^{N-k}q\1_{\{\omega(i)\geq 1\}}\bigl[f(\omega^{i,i-1})-f(\omega)\bigr]\,,
\end{align}
on $\Sigma_{N-k,2k-N}$.
Define the bijection $\widetilde{\Pi}\colon \mathcal{H}_{N,k} \to \Sigma_{N-k,2k-N}$ by
\begin{align*}
     \widetilde{\Pi}(\xi)(x-1) = {\Pi}(\xi)(x)-1 \quad \textrm{for} \quad x \in \{2,3,\ldots,N-k+1\}\,.
\end{align*}
If $\xi \in \mathcal{H}_{N,k}$ then $\tilde\omega_t = \widetilde{\Pi}(\xi_t)$ evolves as a ZRP with generator \eqref{eq:constZRP}, for all times $t \leq \tau_2$.
Let $\widetilde{Q}^\nu$ denote the law of constant rate ZRP, evolving according to \eqref{eq:constZRP} on $\Sigma_{N-k,2k-N}$, starting from the distribution $\nu$.
From the preceding construction, it is clear that for $\xi \in \mathcal{H}_{N,k}$,
\begin{align}
\label{eq:ZRPhit}
   P_t^{\xi}(\mathcal{E}_{N,K}^c) \geq  \widetilde{Q}^{\widetilde{\Pi}(\xi)}(\tau > t),
\end{align}
where $\tau = \inf\{t \geq 0 \,:\,  \tilde\omega \in E\}$ and $E = \{ \tilde\omega\,:\,\tilde\omega(1) \geq 1\}$ .

For the remainder of the proof, let $n =N-k$ and $m =2k-N$.
It is straightforward to check (see for example \cite{SPITZER1970246}) that the process given by \eqref{eq:constZRP} on $\Sigma_{n,m}$ has equilibrium distribution given by
\begin{equation} \label{eq:freezerostat}
\pi_{n,m}(\omega)=\frac{1}{Z_{n,m}}\prod_{x=1}^{n}\lambda^{(n+1-x)\omega(x)},
\end{equation}
where $\lambda = q/p<1$ and 
$$
Z_{n,m}=\sum_{\omega\in\Sigma_{n,m}}\prod_{x=1}^{n}\lambda^{(n+1-x)\omega(x)},
$$
is the normalisation constant.
Under this distribution, the event $E$, that the first site contains a particle, is rare.
Hence by standard bounds on the hitting time of rare sets (see for example \cite{aldousbrown})
\begin{align}
    \label{eq:aldousbrown2}
    \widetilde{Q}^{\pi_{n,m}}(\tau > t) \geq \pi_{n,m}(E^c)\mathrm{exp}\Biggl(-t\frac{q(E,E^{c})}{\pi_{n,m}(E^{c})}\Biggr)\,,
\end{align}
where $q(E,E^{c})$ is the \textit{capacity} given by 
$$
q(E,E^{c})=\sum_{\omega\in E}\sum_{\omega'\in E^{c}}\pi_{n,m}(\omega)\widetilde{\mathcal{L}}_{\mathrm{ZR}}(\omega,\omega') = p\, \pi_{n,m}(\{\omega\,:\,\omega(1) = 1\})\,.
$$
Since the stationary measures are of product form (conditional product measures), the probability on the right--hand side can be expressed explicitly as 
\[
\pi_{n,m}(\{\omega(1) = 1\}) = \frac{\lambda^n Z_{n-1,m-1}}{Z_{n,m}}\,,
\]
and similarly
\[
\pi_{n,m}(\{\omega(1) = 0\}) = \frac{Z_{n-1,m}}{Z_{n,m}}\,.
\]
It follows that 
\begin{align}
\label{eq:pione}
\pi_{n,m}(\{\omega(1) = 1\}) \leq \frac{\pi_{n,m}(\{\omega(1) = 1\})}{\pi_{n,m}(\{\omega(1) = 0\})} = \lambda^{n-1}\frac{\lambda Z_{n-1,m-1}}{Z_{n-1,m}}\,.
\end{align}
Also, the final ratio on the right--hand side can be identified with $\pi_{n-1,m}(\{\omega(n-1)>0\})$, since every configuration $\omega\in\Sigma_{n-1,m-1}$ may be uniquely identified with a configuration in $\omega'\in\Sigma_{n-1,m}$ such that $\omega'(n-1)>0$ by adding a single particle to the site $n-1$ in $\omega$ (and vice versa).
It follows from \eqref{eq:pione} that
\begin{align}
\label{eq:pione2}
\pi_{n,m}(\{\omega(1) = 1\}) \leq \lambda^{n-1}\,.
\end{align}
Moreover, $\pi_{n,m}(E^{c})\geq 1-\pi_{n,m}(\{\omega(1)=1\})\geq 1/2$ for $n-1=N-k$ sufficiently large (depending on $p/q$). 
Combining these bounds with \eqref{eq:ZRPhit} and \eqref{eq:aldousbrown2}, we have 
\begin{equation} \label{eq:aldousbrown3}
\sup_{\xi \in \mathcal{H}_{N,k}}P_{t}^{\xi}\bigl(\mathcal{E}_{N,k}^{c}\bigr)\geq \sup_{\xi \in \mathcal{H}_{N,k}} \widetilde{Q}^{\widetilde{\Pi}(\xi)}(\tau > t) \geq \widetilde{Q}^{\pi_{n,m}}(\tau > t) \geq  \mathrm{exp}\bigl(-2tp\lambda^{N-k}\bigr)\,,
\end{equation}
since $\widetilde{\Pi}(\mathcal{H}_{N,k}) = \Sigma_{N-k,2k-N}$.
Hence, for $\epsilon\in(0,1)$, choosing 
$$
t= \frac{1}{2p}\log(\epsilon^{-1})\Bigl(\frac{p}{q}\Bigr)^{N-k},
$$
it follows from \eqref{eq:aldousbrown3} that 
$$
d_{N,k}(t) = \sup_{\pi}\lVert P_{t}^{\pi}-\mu_{N,k}\rVert_{\mathrm{TV}} \geq \sup_{\xi \in \mathcal{H}_{N,k}}P_{t}^{\xi}\bigl(\mathcal{E}_{N,k}^{c}\bigr) \geq \epsilon \,,
$$
which completes the proof. 
\end{proof}

%%%%%%%%%%%%%%%%%%%%%%%%%%%%%%%%%%%%%%%%%%
%%%%%%%%%%%%%%%%%%%%%%%%%%%%%%%%%%%%%%%%%%

\section{FEP on the circle}

%%%%%%%%%%%%%%%%%%%%%%%%%%%%%%%%%%%%%%%%%%
%%%%%%%%%%%%%%%%%%%%%%%%%%%%%%%%%%%%%%%%%%

In Section \ref{sec:cirlower} we prove Theorem \ref{th:ergodicmixing}(a) and in Section \ref{sec:lowersob} we prove Theorem \ref{th:ergodicmixing}(b). 
The proof of Proposition \ref{prop:hiterg} can be found in Appendix \ref{sec:hitergodic}. 

%%%%%%%%%%%%%%%%%%%%%%%%%%%%%%%%%%%%%%%%%%

\subsection{Lower bound on the mixing time} \label{sec:cirlower}

%%%%%%%%%%%%%%%%%%%%%%%%%%%%%%%%%%%%%%%%%%

We first give a proof of Theorem \ref{th:ergodicmixing}(a).
The idea of the proof is to map the FEP on the ergodic component, $\mathcal{G}_{N,k}$, to a simple symmetric exclusion process (up to a random rotation).
We show that, started with all the excess particles in one half of the circle, before a time of order $c k^2 \log (N-k)$ there exists a region with anomalously low density with respect to $\nu_{N,k}$.

\begin{proof}[Proof of Theorem \ref{th:ergodicmixing}(a)]
Let $\xi\in\mathcal{G}_{N,k}$ and assume that $k\geq2N/3$.
Similarly to the construction in Section \ref{sec:coupleZRP}, where we define the mapping from the FEP on the circle to a ZRP via the position of a tagged particle, we now describe a mapping to a SSEP -- roughly, the exclusion process performed by the particle-hole dimers.

Given a FEP configuration, $\xi \in  \mathcal{G}_{N,k}$, and a tagged particle in $\xi$, at position $x(0)$, label the positions of the remaining particles in clockwise order $x(1),x(2),\ldots,x(k-1)$. 
This defines a unique SEP configuration $\mathcal{S}[\xi,x(0)] \in \Omega_{k,N-k}^\circ$ via
\begin{align*}
    \mathcal{S}[\xi,x(0)](i) =\mathds{1}\bigl(\xi(x(i)+1)=0\bigr) = 1-\xi(x(i)+1) \,, \quad \textrm{for $0\leq i\leq k-1$.} 
\end{align*}
Let $X_t(0)$ denote the location of the tagged particle initially at $x(0)$ under the FEP dynamics, and let the remaining particles retain their labels in clockwise order for all $t \geq 0$. 
It is straightforward to check that $(\zeta_t^\xi)_{t\geq 0}$ defined by
\begin{align*}
    \zeta_t^\xi = \mathcal{S}[\xi_t,X_t(0)]\,,
\end{align*}
defines a simple symmetric exclusion processes (SSEP) on $\mathbb{T}_k$ with $n:=N-k$ particles.
Furthermore, if $\xi \sim \nu_{N,k}$ then $\mathcal{S}[\xi,x(0)]\sim \pi^\circ$, where $\pi^{\circ}$ is the uniform measure on $\Omega_{k,n}^\circ$.
It turns out that, under $\pi^\circ$, it is exponentially unlikely to observe \emph{any} region in which the total number of particles differs from the expected number by an amount of order $\sqrt{n}$,  see for example \cite[Remark 4.3]{lacoindiff}. More precisely,
\begin{align}
\label{eq:expsmall-fluc}
\pi^\circ\left( \exists\, x,y \in \mathbb{T}_k, \ \left| \sum_{z=x+1}^y\left(\zeta(z) - \frac{n}{k}\right)\right| \geq s \sqrt{n}\right) \leq e^{-cs^2}\,.
\end{align}

Adapting a slightly refined version of the argument of Morris \cite[Section 6]{Morris_2006}, we show that there exists an initial condition $\xi$ and a constant $c >0$ such that for $t \leq c k^2 \log n$, with high probability, there exists a region with anomalously low density in $\zeta_t^\xi$, and hence in $\mathcal{S}[\xi_t,x'(0)]$, where $x'(0)$ is the location of the first particle to the right of site zero in $\xi_t$, since these two SSEP configurations are equal up to a random rotation. Hence, by the previous observation, the FEP cannot be well mixed.

We use the standard construction of the SSEP on $\Omega_{k,n}^\circ$ in terms of the interchange process (see, e.g. \cite{Morris_2006}). Assume, without loss of generality that $k$ is even.
Fix $\xi \in \cG_{N,k}$ and $\zeta_0 = \mathcal{S}[\xi,x(0)]$. 
Label the particles in the SEP configuration $\zeta_0$ by $1,2,\ldots,n$, and their positions, on $\mathbb{T}_k$, by $Y_0(1),\ldots,Y_0(n)$.
At rate one each particle chooses a neighbour (each with probability $1/2$) and exchanges the local configuration.
Then the unordered set $\{Y_t(1),\ldots,Y_t(n)\}$ (of the positions of the labelled particles) behaves as the set of occupied sites in $\zeta_t^\xi$ and, for each $i \in \{1,\ldots,n\}$, the marginal process on $Y_t(i)$ behaves as a simple symmetric random walk on $\mathbb{T}_k$.
Fix $\xi \in \cG_{N,k}$ such that the particle positions in $\zeta_0$ are located to minimise 
\[
\sum_{i=1}^n\left|Y_0(i) - \frac{k}{4}\right|\,.
\]
By assumption on $N$ and $k$, the left half of the circle, $U=\{\frac{k}{2},\frac{k}{2}+1,\ldots,k-1\}\subset \mathbb{T}_k$, is initially empty in $\zeta_0$.
Also, at least $n/2$ particles are initially located in the interval $S=\{\frac{k}{8},\frac{k}{8}+1,\ldots,\frac{3k}{8}\}$.
By standard results for the continuous--time simple random walk (for example by spectral decomposition), for each $i$ such that $Y_0(i) \in S$, there exists universal constants $\delta,\gamma >0$ such that
\[
\mathbb{P}(Y_t(i) \in U) \leq \frac{1}{2} - \delta\, \mathrm{exp}\left({-\frac{\gamma t}{k^2} }\right)\,.
\]
Note, if $Y_0(i)\notin S$ then $\mathbb{P}(Y_t(i) \in U)\leq1/2$. 
Let $N_t = |\{ i\,:\, Y_t(i) \in U\}|$, then since at least half the particles start in $S$,
\[
\mathbb{E}(N_t)= \mathbb{E}\Big(\sum_{i=1}^n\1_{\{Y_t(i) \in U\}} \Big)\leq \frac{n}{4}+
\frac{n}{2}\left[\frac{1}{2} -   \delta\, \mathrm{exp}\left({-\frac{\gamma t}{k^2} }\right)\right]=\frac{n}{2}\left[1 -   \delta\, \mathrm{exp}\left({-\frac{\gamma t}{k^2} }\right)\right]\,.
\]
Moreover, by negative correlations (see, e.g. \cite{Morris_2006}), we have $\textrm{Var}(N_t) \leq \mathbb{E}(N_t) \leq n/2$.
Thus, by Chebyshev's inequality,
\begin{align*}
    \mathbb{P}\left(N_t \geq \frac{n}{2} - \frac{\delta n^{3/4}}{4}\right) \leq \frac{\textrm{Var}(N_t)}{\left((n\delta/2)(e^{-\gamma t/k^2} - (1/2)n^{-1/4})\right)^2}\,. %\leq \frac{1}{(k\delta/2)(e^{-\gamma t/k^2} - (1/2)k^{-1/4})}\frac{}{}
\end{align*}
Hence, for $t \leq \frac{k^2}{4\gamma}\log n$, we have 
\[
\mathbb{P}\left(  \exists\, x,y \in \mathbb{T}_k, \ \left| \sum_{z=x+1}^y\left(\zeta_t^\xi(z) - \frac{n}{k}\right)\right| \geq \delta n^{3/4}\right) \geq \mathbb{P}\left(N_t < \frac{n}{2} - \frac{\delta n^{3/4}}{4}\right) \geq 1 - \frac{8}{\delta^2 \sqrt{n}}\,.
\]
The right-hand side is greater than $1/2$ provided that $n > (16/\delta)^2$.
However, it follows from \eqref{eq:expsmall-fluc} that the probability of the event on the left hand side is exponentially small (in $n$) under $\nu_{N,k}$. Hence, for $N$ and $k$ sufficiently large $T^{N,k}_{\mathrm{cir}}(\epsilon)\geq \frac{k^2}{4\gamma}\log (N-k)$, which completes the proof.

If $k\leq 2N/3$, we may swap the roles of particles and holes in the associated SEP configurations.
\end{proof}

%%%%%%%%%%%%%%%%%%%%%%%%%%%%%%%%%%%%%%%%%%%%%%

\subsection{Lower bound on the log--Sobolev constant} \label{sec:lowersob}

%%%%%%%%%%%%%%%%%%%%%%%%%%%%%%%%%%%%%%%%%%%%%%

We will lower bound the Dirichlet form by removing the transitions across a fixed edge (see the partitions \eqref{eq:gpartition} below), and we will upper bound the entropy using a quasi--factorisation in \cite{cesientropy}, see Proposition \ref{prop:cesi} below. 
This will allow us to compare with the log--Sobolev constant for the SSEP, for which known bounds (see \cite{yaulogsobolev,salez2020sharp}) will yield the result. 

For our purposes, it will be beneficial to relate the entropy to the conditional entropy, which we define as follows: 
\begin{definition} \label{def:condentropy}
Let $(\Omega,\mathcal{P}(\Omega),\pi)$ be a finite probability space, where $\mathcal{P}(\Omega)$ denotes the power set of $\Omega$, and let $\mathcal{F}_{1},\mathcal{F}_{2}$ be two sub--$\sigma$--algebras of $\mathcal{P}(\Omega)$. 
For $i \in \{1,2\}$, we write $\pi_{i}(f):=\pi(f\mid\mathcal{F}_{i})$. If $f$ is a nonnegative function, then we define the conditional entropy by 
\begin{equation} \label{eq:condentropy}
\mathrm{Ent}_{\pi_i}(f)=\pi_{i}\Biggl(f\log\Biggl(\frac{f}{\pi_{i}(f)}\Biggr)\Biggr).
\end{equation}
\end{definition}
The following result, due to Cesi \cite{cesientropy}, gives a quasi--factorisation of the entropy.
In particular if the two sub--$\sigma$--algebras are `close' to being independent, the entropy is bounded by the average of the conditional entropies, up to some constant. For $f\colon \Omega\to \mathbb{R}$, let $\|f\|_{1,\pi} = \pi(|f|)$ and $\|f\|_{\infty,\pi} = \sup_{\sigma\,:\, \pi\{\sigma\}>0} |f(\sigma)|$.
\begin{prop}[{\cite[Proposition 2.1]{cesientropy}}] \label{prop:cesi}
Let 
$
\vartheta(\epsilon):=84\epsilon/(1-\epsilon)^{2}.
$
If, for some $\epsilon\in[0,1)$,
\begin{equation} \label{eq:cesi1}
\bigl\lVert\pi_{2}(g)-\pi(g)\bigr\rVert_{\infty,\pi}\leq\epsilon\bigl\lVert g\rVert_{1,\pi},
\end{equation}
for all $\mathcal{F}_{1}$--measurable functions $g$, then for each $f\colon \Omega\to \mathbb{R}$
\begin{equation}\label{eq:cesi2}
\mathrm{Ent}_{\pi}(f^{2})\leq\pi\bigl[\mathrm{Ent}_{\pi_{1}}(f^{2})+\mathrm{Ent}_{\pi_{2}}(f^{2})\bigr]+\vartheta(\epsilon)\mathrm{Ent}_{\pi}(f^{2}).
\end{equation}
\end{prop}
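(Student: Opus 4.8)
The statement is precisely Cesi's quasi-factorisation, which one could simply cite; here is how I would prove it directly. Writing $g=f^{2}\ge 0$ and using that $\mathrm{Ent}_\pi$ and both sides of \eqref{eq:cesi1} are positively homogeneous in $g$, I would first normalise so that $\pi(g)=1$. The engine is the chain rule for entropy: for $i\in\{1,2\}$,
\begin{equation}\label{eq:tower}
\mathrm{Ent}_\pi(g)=\pi\bigl[\mathrm{Ent}_{\pi_i}(g)\bigr]+\mathrm{Ent}_\pi(\pi_i g),
\end{equation}
which follows from Definition \ref{def:condentropy} together with $\pi\pi_i=\pi$ (expand $\pi[\mathrm{Ent}_{\pi_i}(g)]=\pi(g\log g)-\pi(\pi_i g\log\pi_i g)$ and add $\mathrm{Ent}_\pi(\pi_i g)$). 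Summing \eqref{eq:tower} over $i=1,2$ and substituting the resulting expression for $\pi[\mathrm{Ent}_{\pi_1}(g)+\mathrm{Ent}_{\pi_2}(g)]$ into the desired inequality \eqref{eq:cesi2}, the proposition becomes equivalent to the symmetric bound
\begin{equation}\label{eq:target}
\mathrm{Ent}_\pi(\pi_1 g)+\mathrm{Ent}_\pi(\pi_2 g)\le\bigl(1+\vartheta(\epsilon)\bigr)\,\mathrm{Ent}_\pi(g).
\end{equation}

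The plan is then to prove \eqref{eq:target} after rewriting its left-hand side. Since $\pi(\pi_i g)=1$, one has $\mathrm{Ent}_\pi(\pi_i g)=\pi(\pi_i g\,\log\pi_i g)=\pi(g\,\log\pi_i g)$, the last equality because $\log\pi_i g$ is $\mathcal F_i$-measurable. Subtracting $\mathrm{Ent}_\pi(g)=\pi(g\log g)$ yields the exact identity
\begin{equation}\label{eq:identity}
\mathrm{Ent}_\pi(\pi_1 g)+\mathrm{Ent}_\pi(\pi_2 g)-\mathrm{Ent}_\pi(g)=\pi\!\left(g\,\log\frac{(\pi_1 g)(\pi_2 g)}{g}\right),
\end{equation}
so that \eqref{eq:target}, and hence the whole proposition, reduces to bounding the single quantity on the right of \eqref{eq:identity} by $\vartheta(\epsilon)\,\mathrm{Ent}_\pi(g)$. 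This is exactly where the mixing hypothesis \eqref{eq:cesi1} enters: applied to the $\mathcal F_1$-measurable function $\pi_1 g$ (whose $\pi$-mean is $1$) it gives $\bigl\lVert\pi_2\pi_1 g-1\bigr\rVert_{\infty,\pi}\le\epsilon$, the precise sense in which $\mathcal F_1$ and $\mathcal F_2$ decouple, forcing $(\pi_1 g)(\pi_2 g)$ to be close to $g$ in the averaged sense measured by \eqref{eq:identity}.

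The hard part is precisely this last estimate, and it is what produces the explicit constant $84$ in $\vartheta$. A soft bound is not enough: using $\log x\le x-1$ in \eqref{eq:identity} collapses its right-hand side to $\mathrm{Cov}_\pi(\pi_1 g,\pi_2 g)=\pi\bigl((\pi_2 g-1)(\pi_2\pi_1 g-1)\bigr)\le\epsilon\,\pi|g-1|$, and Pinsker's inequality only gives $\pi|g-1|\le\sqrt{2\,\mathrm{Ent}_\pi(g)}$; this produces an $\epsilon\sqrt{\mathrm{Ent}_\pi(g)}$ bound, which is useless when $\mathrm{Ent}_\pi(g)$ is small, since we need control \emph{linear} in $\mathrm{Ent}_\pi(g)$. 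The resolution, which I would follow Cesi on, is to truncate: split the space according to whether $g$ is comparable to its mean or much larger than it. On the bulk $\{g\approx 1\}$ one compares entropy with variance (both quadratic in $g-1$) and extracts the factor $\epsilon$ from the decoupling $\lVert\pi_2\pi_1 g-1\rVert_{\infty,\pi}\le\epsilon$, while on the tail $\{g\gg 1\}$ the logarithmic weight makes $\mathrm{Ent}_\pi(g)$ itself dominate. Optimising the truncation level and recombining the two regimes is the genuinely technical bookkeeping and yields $\vartheta(\epsilon)=84\epsilon/(1-\epsilon)^{2}$; for this final step I would invoke \cite[Proposition 2.1]{cesientropy} rather than reproduce the estimates.
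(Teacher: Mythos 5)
Your proposal is correct, and it ultimately takes the same route as the paper: the paper gives no proof of this proposition at all, since it is quoted verbatim (restricted to finite state spaces) from \cite[Proposition 2.1]{cesientropy}, and your argument, after a chain of correct reductions --- the entropy chain rule $\mathrm{Ent}_\pi(g)=\pi[\mathrm{Ent}_{\pi_i}(g)]+\mathrm{Ent}_\pi(\pi_i g)$, the exact identity for $\mathrm{Ent}_\pi(\pi_1 g)+\mathrm{Ent}_\pi(\pi_2 g)-\mathrm{Ent}_\pi(g)$, and the observation that \eqref{eq:cesi1} applied to $\pi_1 g$ yields $\lVert\pi_2\pi_1 g-1\rVert_{\infty,\pi}\leq\epsilon$ --- likewise defers to Cesi for the truncation estimate that produces the constant $84$. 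The scaffolding you add is accurate (including the instructive point that the soft bound via $\log x\leq x-1$ and Pinsker gives only $\epsilon\sqrt{2\,\mathrm{Ent}_\pi(g)}$, which is not linear in the entropy), so your write-up and the paper's treatment rest on exactly the same citation.
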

We note that Proposition \ref{prop:cesi} is the restriction of \cite[Proposition 2.1]{cesientropy} to finite state spaces, and the statement of \cite[Proposition 2.1]{cesientropy} is far more general.

We consider two `almost independent' partitions of the ergodic component $\mathcal{G}$ (the almost independence is contained in Lemma \ref{lem:asympind}). 
Fix $2<\ell<N$, then
\begin{align}
\begin{split}\label{eq:gpartition}
\mathcal{G}_{1}^{(0)}&=\{\xi\in\mathcal{G}:\xi(1)=0\}, \quad
\mathcal{G}_{1}^{(1)}=\{\xi\in\mathcal{G}:\xi(1)=1\}, \ \ \textrm{and}\\
\mathcal{G}_{\ell}^{(0)}&=\{\xi\in\mathcal{G}:\xi(\ell)=0\}, \quad
\mathcal{G}_{\ell}^{(1)}=\{\xi\in\mathcal{G}:\xi(\ell)=1\},
\end{split}
\end{align}
For any configuration $\xi\in\mathcal{G}_{1}^{(0)}$, we must have that $\xi(2)=\xi(N)=1$ since no two holes are adjacent in the ergodic component.
Similarly, for $\xi\in\mathcal{G}_{\ell}^{(0)}$, we have $\xi(\ell-1)=\xi(\ell+1)=1$.

\begin{remark} \label{rmk:transitions}
We observe that the transitions of the SFEP restricted to $\mathcal{G}_{x}^{(0)}$ are the equivalent to those of the SFEP on the ergodic component for the segment, $\mathcal{E}_{N-1,k}$, by treating $2$ and $N$ ($\ell+1$ and $\ell-1$ respectively) as the endpoints of the segment.
Furthermore, by Remark \ref{rmk:FEPandsep} this is equivalent to the transitions of a SSEP on the segment $[k-1]$ with $N-k-1$ particles.
Similarly, restricted to $\mathcal{G}_{x}^{(1)}$ the process is equivalent to a SFEP on $\mathcal{E}_{N+1,k+1}$ and therefore to a SSEP on the segment $[k]$ with $N-k$ particles.
\end{remark}

In order to apply Proposition \ref{prop:cesi}, we need to construct two weakly dependent $\sigma$--algebras. 
Let $\mathcal{F}_{1}$ be the $\sigma$--algebra generated by $\mathcal{G}_{1}^{(0)}$ and let $\mathcal{F}_{\ell}$ be the $\sigma$--algebra generated by $\mathcal{G}_{\ell}^{(0)}$. 
These will be almost independent as $N \to \infty$ and $k/N \to \rho$ if $\ell$ is sufficiently large. 
This follows from the decay of correlations shown in \cite{blondel2020} and we postpone the proof until the end of this section.

\begin{lemma} \label{lem:asympind}
Let $k(N)$ be a sequence such that $k(N)/N\rightarrow \rho \in (\frac{1}{2},1)$ as $N\rightarrow\infty$.
Then for $i,j\in \{0,1\}$ 
\begin{equation} \label{eq:cesicondition2}
\lim_{\ell \to \infty}\lim_{N \to \infty}\frac{\nu_{N,k}\bigl(\mathcal{G}_{1}^{(i)}\cap\mathcal{G}_{\ell}^{(j)}\bigr)}{\nu_{N,k}\bigl(\mathcal{G}_{1}^{(i)}\bigr)\nu_{N,k}\bigl(\mathcal{G}_{\ell}^{(j)}\bigr)}= 1\,.
\end{equation}
\end{lemma}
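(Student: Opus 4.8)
The plan is to exploit the fact that, since we work with the SFEP, $\nu_{N,k}$ is the \emph{uniform} measure on $\mathcal{G}_{N,k}$, i.e.\ the canonical ensemble of the one--dimensional hard--constraint model in which holes may not be adjacent, and to show that its finite--dimensional marginals converge, as $N\to\infty$ with $k/N\to\rho$, to those of an infinite--volume Markov (Gibbs) measure with exponentially decaying correlations. First I would record the marginals: by rotation invariance of $\nu_{N,k}$ one has $\nu_{N,k}(\mathcal{G}_x^{(0)})=(N-k)/N\to 1-\rho$ and $\nu_{N,k}(\mathcal{G}_x^{(1)})=k/N\to\rho$ for every site $x$. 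Since $\rho\in(1/2,1)$, both limits lie in $(0,1)$, so the denominator $\nu_{N,k}(\mathcal{G}_1^{(i)})\,\nu_{N,k}(\mathcal{G}_\ell^{(j)})$ in \eqref{eq:cesicondition2} converges to a strictly positive constant. It therefore suffices to show that the joint probability $\nu_{N,k}(\mathcal{G}_1^{(i)}\cap\mathcal{G}_\ell^{(j)})$ converges, in the iterated limit, to the same product of the marginal limits.

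To analyse the joint law I would use the transfer matrix
\begin{equation*}
T=\begin{pmatrix} 0 & \sqrt{z} \\ \sqrt{z} & 1 \end{pmatrix},
\end{equation*}
which assigns weight $z$ to every hole (necessarily flanked by two particles) and forbids the pattern $00$; the partition function on the circle is $\mathrm{Tr}(T^N)$, and extracting the coefficient of $z^{N-k}$ (i.e.\ fixing exactly $N-k$ holes) recovers $\nu_{N,k}$. Inserting the occupation projectors at sites $1$ and $\ell$ then writes $\nu_{N,k}(\mathcal{G}_1^{(i)}\cap\mathcal{G}_\ell^{(j)})$ as a ratio of such constrained traces. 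Fixing the fugacity $z=z(\rho)$ so that the grand--canonical hole density equals $1-\rho$, the equivalence of ensembles for this model (as developed and used in \cite{blondel2020}) gives, for each \emph{fixed} $\ell$, that as $N\to\infty$ the marginal of $\nu_{N,k}$ on $\{1,\ell\}$ converges to that of the grand--canonical measure $\nu_\rho$, the stationary two--state Markov chain with transition matrix $\bigl(\begin{smallmatrix}0&1\\ a&1-a\end{smallmatrix}\bigr)$, $a=(1-\rho)/\rho$, whose stationary law is $(\nu_\rho(\xi=0),\nu_\rho(\xi=1))=(1-\rho,\rho)$. Because the two limits in \eqref{eq:cesicondition2} are iterated (first $N\to\infty$, then $\ell\to\infty$), no uniformity in $\ell$ is needed here: for fixed $\ell$ the clockwise distance from $1$ to $\ell$ is $\ell-1$, so the $N\to\infty$ limit of the joint probability is exactly the $\nu_\rho$ two--point function $\nu_\rho(\xi(0)=i,\xi(\ell-1)=j)$ at separation $\ell-1$.

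It then remains to let $\ell\to\infty$ and invoke decay of correlations. The subdominant eigenvalue of the transition matrix above is $\lambda_2=-a=-(1-\rho)/\rho$, with $|\lambda_2|<1$ precisely because $\rho>1/2$, so the truncated two--point function of $\nu_\rho$ decays like $|\lambda_2|^{\,\ell-1}\to 0$; equivalently $\nu_\rho(\xi(0)=i,\xi(\ell-1)=j)\to\nu_\rho(\xi=i)\,\nu_\rho(\xi=j)$. Since $\nu_\rho(\xi=i)$ coincides with the limit of the marginal $\nu_{N,k}(\mathcal{G}_1^{(i)})$ found above, both numerator and denominator of \eqref{eq:cesicondition2} converge to the same strictly positive product, and the ratio tends to $1$.

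I expect the main obstacle to be the equivalence--of--ensembles step, i.e.\ justifying that the canonical (fixed hole number) two--point function converges at fixed separation to the grand--canonical one; this is exactly where \cite{blondel2020} is used, although it can be made self--contained by writing the joint probability as a convolution, over the split of holes between the two arcs separating $1$ and $\ell$, of the elementary counts $\binom{L-h+1}{h}$ of $h$ non--adjacent holes on a segment of length $L$, and extracting the asymptotics via a local central limit theorem. The subsequent decay--of--correlations step is then routine once the Markov structure of the limit measure $\nu_\rho$ has been identified.
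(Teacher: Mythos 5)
Your proof is correct and shares the paper's two--step skeleton: (i) an equivalence--of--ensembles step showing that, for fixed $\ell$, the joint law of $(\xi(1),\xi(\ell))$ under $\nu_{N,k}$ converges as $N\to\infty$ to that of the infinite--volume grand canonical measure, followed by (ii) exponential decay of correlations of that limit measure as $\ell\to\infty$. The difference is in which half is computed and which is cited. The paper proves (i) by a short explicit count: since $\nu_{N,k}$ is uniform on $\mathcal{G}_{N,k}$, the probability of any fixed pattern on $\{1,\ldots,\ell\}$ is a ratio of binomial coefficients (Claim \ref{claim:equiv}), and it then cites \cite[Corollary 6.6]{blondel2020} for (ii) (Theorem \ref{thm:GCdecay}). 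You do the reverse: you defer (i) to \cite{blondel2020} (or to a sketched convolution/local--CLT argument), but prove (ii) by hand, identifying the grand canonical measure as the stationary two--state Markov chain with transition matrix $\bigl(\begin{smallmatrix}0&1\\ a&1-a\end{smallmatrix}\bigr)$, $a=(1-\rho)/\rho$, and reading the decay rate off the subdominant eigenvalue $-a$, whose modulus is $<1$ precisely because $\rho>1/2$. This identification is consistent with the paper's Definition \ref{def:GC} (the Markov--chain marginals $\pi_{\sigma_1}\prod_x P(\sigma_x\to\sigma_{x+1})$ reproduce the stated formula), so your spectral computation is a legitimate, self--contained substitute for the citation, and it makes transparent where $\rho>1/2$ enters. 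The only place your argument is thinner than the paper's is step (i): your convolution/local--CLT sketch still needs the arc counts to carry the right boundary conditions at sites $1$ and $\ell$ (the counts are binomials like $\binom{L-h+1}{h}$ only up to boundary corrections), whereas the paper's direct ratio--of--binomials computation settles this in a few lines; adopting that computation for (i) while keeping your eigenvalue argument for (ii) would give a fully self--contained proof. Your preliminary observation $\nu_{N,k}\bigl(\mathcal{G}_{x}^{(0)}\bigr)=(N-k)/N$ by rotation invariance is correct and gives the denominator limit directly.
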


Before proving Lemma \ref{lem:asympind}, we show how it may be used to yield Theorem \ref{th:ergodicmixing}(b). 
Let $\nu_{x}(\cdot)=\nu_{N,k}(\cdot\mid\mathcal{F}_{x})$ for $x\in \{1,\ell\}$.
Fix $\epsilon > 0$ such that $\vartheta(\epsilon) < 1/2$.
It follows immediately from Lemma \ref{lem:asympind} that there exists $\ell$, depending on $\epsilon$, such that for each $N$ sufficiently large (depending on $\epsilon$ and $\ell$) 
\begin{align*}
    \left\| \nu_1(g) - \nu_{N,k}(g) \right\|_{\infty,\nu_{N,k}} \leq \epsilon \left\| g\right\|_{1,\nu_{N,k}}\,,
\end{align*}
for each $\mathcal{F}_{\ell}$ measurable function $g$.
Hence by Proposition \ref{prop:cesi}, for all  $f\colon \Omega_{N,k}^\circ \to \mathbb{R}$
\begin{align}\label{eq:entbound}
    \mathrm{Ent}_{\nu_{N,k}}(f^{2})\leq 2\nu_{N,k}\bigl(\mathrm{Ent}_{\nu_{1}}(f^{2})+\mathrm{Ent}_{\nu_{\ell}}(f^{2})\bigr)\,.
\end{align}
So it remains to give an upper bound on the right hand side in terms of the Dirichlet form.
By removing transitions between parts of the partitions \eqref{eq:gpartition}  from the right hand side of \eqref{eq:dirichletform}
\begin{equation} \label{eq:diric2}
\mathcal{D}(f)\geq \frac{1}{2}\sum_{\sigma \in \{0,1\}}\sum_{\xi,\xi'\in\mathcal{G}^{(\sigma)}_{x}}\nu_{N,k}(\xi)c(\xi,\xi')\bigl(f(\xi)-f(\xi')\bigr)^{2}.
\end{equation}
Since $\nu_{N,k}$ is uniform on $\mathcal{G}$, observe that $\nu_{x}(f) = \nu_{N,k}(\cdot\mid \mathcal{F}_x)$ is the function given by  
\begin{align*}
\nu_{x}\bigl(f\bigr)(\xi)=\mathds{1}\bigl(\xi_x = 0\bigr)\sum_{\xi'\in\mathcal{G}_{x}^{(0)}}\frac{f(\xi')}{\bigl\lvert\mathcal{G}_{x}^{(0)}\bigr\rvert} +\mathds{1}\bigl(\xi_x = 1\bigr)\sum_{\xi'\in\mathcal{G}_{x}^{(1)}}\frac{f(\xi')}{\bigl\lvert\mathcal{G}_{x}^{(1)}\bigr\rvert} \,,
\end{align*}
for $\xi\in\mathcal{G}$. 
In particular the value of the function $\nu_{x}(f)$ only depends on the occupation of site $x$ and for each configuration in $\mathcal{G}_{x}^{(i)}$ it is the uniform average of the function $f$ over  $\mathcal{G}_{x}^{(i)}$. 
This is also the stationary measure of the FEP restricted to the appropriate interval (see Remark \ref{rmk:transitions}).
Hence, from \eqref{eq:diric2} we have
\begin{align}
    \mathcal{D}(f)\geq \frac{k}{N} \mathcal{D}_x^{(1)}(f) + \left(1-\frac{k}{N}\right)\mathcal{D}_x^{(0)}(f)\,, 
\end{align}
where 
\begin{align*}
    \mathcal{D}_x^{(i)}(f) = \frac{1}{2}\sum_{\xi,\xi'\in\mathcal{G}_{x}^{(i)}}\frac{1}{\bigl\lvert\mathcal{G}_{x}^{(i)}\bigr\rvert}c(\xi,\xi')&\bigl(f(\xi)-f(\xi')\bigr)^{2}
\end{align*}
is the Dirichlet form of the process restricted to the interval described in Remark \ref{rmk:transitions}.
Denoting the log--Sobolev constant of the SSEP on $m$ sites with $n$ particles by $\tilde\alpha_{m,n}$ it follows that 
\begin{align*}
    \mathcal{D}(f)&\geq \tilde\alpha_{k,N-k}\,\frac{k}{N} \sum_{\xi'\in\mathcal{G}_{x}^{(1)}}\frac{f^{2}(\xi')}{\bigl\lvert\mathcal{G}_{x}^{(1)}\bigr\rvert}\log\Biggl(\frac{f^{2}(\xi')}{\nu_{x}\bigl(f^{2}\bigr)(\xi')}\Biggr) \nonumber\\ &\quad + \tilde\alpha_{k-1,N-k-1}\,\left(1-\frac{k}{N}\right)\sum_{\xi'\in\mathcal{G}_{x}^{(0)}}\frac{f^{2}(\xi')}{\bigl\lvert\mathcal{G}_{x}^{(0)}\bigr\rvert}\log\Biggl(\frac{f^{2}(\xi')}{\nu_{x}\bigl(f^{2}\bigr)(\xi')}\Biggr) \nonumber\\
    &\geq\min(\tilde\alpha_{k-1,N-k-1},\tilde\alpha_{k,N-k})\nu_{N,k}\bigl(\mathrm{Ent}_{\nu_{x}}(f^{2})\bigr)\,.
\end{align*}
Hence 
\begin{align*}
     \mathcal{D}(f)&\geq \frac{1}{2}\min(\tilde\alpha_{k-1,N-k-1},\tilde\alpha_{k,N-k})\nu_{N,k}\bigl(\mathrm{Ent}_{\nu_{1}}(f^{2}) +\mathrm{Ent}_{\nu_{\ell}}(f^{2})\bigr) \\&\geq C\,N^{-2}\nu_{N,k}\bigl(\mathrm{Ent}_{\nu_{1}}(f^{2}) + \mathrm{Ent}_{\nu_{\ell}}(f^{2})\bigr)\,,
\end{align*}
for some constant that depends on $\rho$ but not $N$, where the last inequality follows from \cite[Theorem A]{yaulogsobolev}.
Combining with \eqref{eq:entbound} this gives the desired lower bound on the log-Sobolev constant.
It only remains to show the asymptotic independence in Lemma \ref{lem:asympind}.

\begin{proof}[Proof of Lemma \ref{lem:asympind}]
The reversible measures, $\nu_{N,k}$, converge locally, as $N \to \infty$ and $k/N \to \rho$, to the infinite volume grand canonical measures, $\pi_\rho$, defined on $\{0,1\}^{\mathbb{Z}}$ (see Definition \ref{def:GC} below).
Although these can be expressed explicitly, they are not of product form and they were investigated in detail in \cite{blondel2020}.
In particular, it has been shown that they admit exponential decay of correlations, see Theorem \ref{thm:GCdecay} below.
This is sufficient to derive the necessary decay of correlations, and hence asymptotic independence required in Lemma \ref{lem:asympind}.

\begin{definition}[{\cite[Definition 6.2]{blondel2020}} Grand canonical measures]\label{def:GC}
    Fix $\rho \in(1/2,1)$ and $\ell \geq 1$, and let $\Lambda_\ell = \{1,2,\ldots,\ell\}$, then for $\sigma \in \{0,1\}^{\Lambda_\ell}$
    \begin{align}
        \pi(\eta_{\vert\Lambda_\ell} = \sigma) = \mathds{1}_{\hat{\mathcal{G}}_\ell}(\sigma)\rho^{\sigma_1+\sigma_\ell -n}(1-\rho)^{\ell -n}(2\rho -1)^{2n+1-\ell - \sigma_1-\sigma_\ell}\,,
    \end{align}
where $n(\sigma) = \sum_{x\in \Lambda_\ell}\sigma(x)$ and $$\hat{\mathcal{G}}_{\ell} = \{\sigma \in \{0,1\}^{\Lambda_L}\,:\,\forall\,(x,x+1)\in \Lambda_\ell,\ \sigma(x)+\sigma(x+1)\geq 1\}\,.$$
\end{definition}
In \cite[Chapter 6]{blondel2020} they showed a more  general version of the following decay of correlations under the grand canonical measures (see Corollary 6.6 in \cite{blondel2020}).
\begin{theorem}[Grand canonical decay of correlations]\label{thm:GCdecay}
    For $\sigma_1, \sigma_\ell \in \{0,1\}$ and any $\rho \in (\frac{1}{2},1)$, there exists a $C(\rho)>0$ such that
    \begin{align}\label{eq:GCdecay}
        \frac{\pi_\rho\big(\eta(1)=\sigma_1,\ \eta(\ell)=\sigma_\ell\big)}{\pi_\rho\big(\eta(1)=\sigma_1\big)\pi_\rho\big(\eta(\ell)=\sigma_\ell\big)} = 1 + O(e^{-C \ell})\,.
    \end{align}
\end{theorem}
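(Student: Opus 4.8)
The plan is to recognise the grand canonical measure $\pi_\rho$ of Definition \ref{def:GC} as the law of a stationary two--state Markov chain, and then to read off the exponential decay of correlations from the spectral gap of its transition matrix. Concretely, consider the stationary Markov chain $(X_x)_{x\in\mathbb{Z}}$ on $\{0,1\}$ (with $1$ a particle and $0$ a hole) having transition matrix
\[
P=\begin{pmatrix}0 & 1\\[0.5ex] \tfrac{1-\rho}{\rho} & \tfrac{2\rho-1}{\rho}\end{pmatrix},
\]
where the entry $P_{00}=0$ encodes the hard constraint that a hole must be followed by a particle, so that the chain lives on $\hat{\mathcal{G}}$. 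For $\rho\in(1/2,1)$ both $(1-\rho)/\rho$ and $(2\rho-1)/\rho$ lie in $(0,1)$ and sum to one, so $P$ is a genuine stochastic matrix, and one checks directly that its stationary distribution is $(\pi_0,\pi_1)=(1-\rho,\rho)$.

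The one genuinely computational step, and the crux of the argument, is to verify that the cylinder probabilities of this chain coincide with the formula in Definition \ref{def:GC}. Fix $\sigma\in\hat{\mathcal{G}}_\ell$, write $n=\sum_{x\in\Lambda_\ell}\sigma(x)$, and let $n_{01},n_{10},n_{11}$ denote the number of consecutive pairs $(\sigma_x,\sigma_{x+1})$ of each admissible type (with $n_{00}=0$ forced by the constraint). A short count of transition types gives $n_{10}=\ell-n-1+\sigma_1$ and $n_{11}=2n+1-\ell-\sigma_1-\sigma_\ell$. Substituting these into the Markov cylinder weight $\pi_{\sigma_1}P_{01}^{\,n_{01}}P_{10}^{\,n_{10}}P_{11}^{\,n_{11}}$ and collecting the exponents of $\rho$, $1-\rho$ and $2\rho-1$ reproduces exactly $\rho^{\sigma_1+\sigma_\ell-n}(1-\rho)^{\ell-n}(2\rho-1)^{2n+1-\ell-\sigma_1-\sigma_\ell}$. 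Hence $\pi_\rho$ is this Markov measure.

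Given the Markov identification, the two--point function is explicit. By Chapman--Kolmogorov, summing the cylinder over the intermediate coordinates $\sigma_2,\ldots,\sigma_{\ell-1}$ yields $\pi_\rho\big(\eta(1)=\sigma_1,\eta(\ell)=\sigma_\ell\big)=\pi_{\sigma_1}(P^{\ell-1})_{\sigma_1\sigma_\ell}$, while stationarity gives the marginals $\pi_\rho\big(\eta(1)=\sigma_1\big)=\pi_{\sigma_1}$ and $\pi_\rho\big(\eta(\ell)=\sigma_\ell\big)=\pi_{\sigma_\ell}$. The eigenvalues of $P$ are $1$ and $\lambda_2=-(1-\rho)/\rho$, and $\lvert\lambda_2\rvert<1$ precisely because $\rho\in(1/2,1)$. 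Writing the spectral decomposition $P^{\ell-1}=\Pi_\infty+\lambda_2^{\,\ell-1}R$, where $\Pi_\infty$ is the rank--one projection whose rows equal $(\pi_0,\pi_1)$ and $R$ is the bounded complementary spectral projector, gives $(P^{\ell-1})_{\sigma_1\sigma_\ell}=\pi_{\sigma_\ell}+\lambda_2^{\,\ell-1}R_{\sigma_1\sigma_\ell}$. Consequently the ratio in \eqref{eq:GCdecay} equals $1+\lambda_2^{\,\ell-1}R_{\sigma_1\sigma_\ell}/\pi_{\sigma_\ell}$; since $\pi_{\sigma_\ell}\geq\min(\rho,1-\rho)>0$ and $R$ is bounded uniformly in $\ell$, this is $1+O(e^{-C\ell})$ with $C=\log\tfrac{\rho}{1-\rho}>0$, as claimed. (Alternatively, the statement is a special case of \cite[Corollary 6.6]{blondel2020}, but the transfer--matrix argument above is self--contained for the two--site correlation we need.)

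The main obstacle is entirely contained in the transition--counting identity of the second paragraph: once the explicit weight in Definition \ref{def:GC} is matched to the Markov cylinder measure, the decay of correlations is routine finite--state Markov chain theory via the spectral gap $1-\lvert\lambda_2\rvert$.
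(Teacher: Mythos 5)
Your argument is correct, but it takes a genuinely different route from the paper for the simple reason that the paper does not prove Theorem \ref{thm:GCdecay} at all: it is imported as a special case of \cite[Corollary 6.6]{blondel2020}, where decay of correlations under the grand canonical measures is established in greater generality. Your transfer--matrix identification checks out. With $P_{00}=0$, $P_{01}=1$, $P_{10}=(1-\rho)/\rho$, $P_{11}=(2\rho-1)/\rho$ and stationary law $(\pi_0,\pi_1)=(1-\rho,\rho)$ (the chain is in fact reversible, since $\pi_0P_{01}=\pi_1P_{10}=1-\rho$), the transition counts for $\sigma\in\hat{\mathcal{G}}_\ell$ are $n_{01}=\ell-n-1+\sigma_\ell$, $n_{10}=\ell-n-1+\sigma_1$ and $n_{11}=2n+1-\ell-\sigma_1-\sigma_\ell$, and the cylinder weight $\pi_{\sigma_1}P_{01}^{n_{01}}P_{10}^{n_{10}}P_{11}^{n_{11}}$ indeed collects to $\rho^{\sigma_1+\sigma_\ell-n}(1-\rho)^{\ell-n}(2\rho-1)^{2n+1-\ell-\sigma_1-\sigma_\ell}$, exactly the weight in Definition \ref{def:GC}, with the indicator $\mathds{1}_{\hat{\mathcal{G}}_\ell}$ enforced automatically by $P_{00}=0$. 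The spectral step is likewise sound: $\operatorname{tr}P=(2\rho-1)/\rho$ and $\det P=-(1-\rho)/\rho$ give eigenvalues $1$ and $\lambda_2=-(1-\rho)/\rho$ with $\lvert\lambda_2\rvert<1$ precisely for $\rho\in(1/2,1)$, so the ratio in \eqref{eq:GCdecay} equals $1+\lambda_2^{\ell-1}R_{\sigma_1\sigma_\ell}/\pi_{\sigma_\ell}=1+O(e^{-C\ell})$ with the explicit rate $C=\log\bigl(\rho/(1-\rho)\bigr)$, and $R$ is independent of $\ell$. As for what each approach buys: the paper's citation delivers decay for general local correlations (more than is strictly needed), whereas your two--state chain argument is elementary, self--contained, and yields an explicit, sharp rate for the two--point function, which is exactly the input consumed by Lemma \ref{lem:asympind} via \eqref{eq:cesicondition2}.
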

To complete the proof of Lemma \ref{lem:asympind} we show that, for any fixed $\ell$, the finite dimensional marginals on $\ell$ sites under the measures $\nu_{N,k}$ converge to those of the grand canonical measure.
Lemma \ref{lem:asympind} follows immediately from the following claim together with \eqref{eq:GCdecay}.
\begin{claim}[Equivalence of ensembles]
\label{claim:equiv}
Fix $\ell\geq 2$. Let $\Lambda_\ell = \{1,2,\ldots,\ell\}$. For $\rho \in (1/2,1)$ and $\sigma \in \{0,1\}^{\Lambda_\ell}$,
\begin{align}
    \nu_{N,k}(\eta_{\vert\Lambda_\ell} = \sigma) \to \pi_\rho(\eta_{\vert\Lambda_\ell} = \sigma)\quad \textrm{as}\ N \to \infty\  \textrm{ and }\ k/N \to \rho \,.
\end{align}
\end{claim}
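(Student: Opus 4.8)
The statement is an \emph{equivalence of ensembles}: the canonical (uniform) measures $\nu_{N,k}$ converge locally to the grand canonical measure $\pi_\rho$. The plan is to prove it by exact enumeration followed by elementary asymptotics of binomial coefficients, using the fact that configurations with no two adjacent holes are counted by simple binomials. Since $\pi_\rho(\eta_{\vert\Lambda_\ell}=\sigma)=0$ and $\nu_{N,k}(\eta_{\vert\Lambda_\ell}=\sigma)=0$ whenever $\sigma\notin\hat{\mathcal{G}}_\ell$, I may assume $\sigma\in\hat{\mathcal{G}}_\ell$ throughout. Write $n=n(\sigma)=\sum_{x\in\Lambda_\ell}\sigma(x)$ and $n_0=\ell-n$ for the numbers of particles and holes in the window.

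First I would reduce the marginal to a ratio of counts,
\[
\nu_{N,k}(\eta_{\vert\Lambda_\ell}=\sigma)=\frac{\#\{\xi\in\mathcal{G}_{N,k}:\xi_{\vert\Lambda_\ell}=\sigma\}}{\lvert\mathcal{G}_{N,k}\rvert},
\]
whose denominator is $\tfrac{N}{k}\binom{k}{N-k}$ by \eqref{eq:circergodiccount}. To count the numerator, I fix $\sigma$ on $\Lambda_\ell$ and enumerate the admissible fillings of the complementary arc $\{\ell+1,\dots,N\}$, a path of $m=N-\ell$ sites carrying the remaining $j=k-n$ particles. The only edges of the circle that are internal neither to the window nor to the arc are the two cut edges $(\ell,\ell+1)$ and $(N,1)$; these force $\xi(\ell+1)=1$ when $\sigma_\ell=0$ and $\xi(N)=1$ when $\sigma_1=0$, so exactly $2-\sigma_1-\sigma_\ell$ endpoints of the arc are pinned to $1$. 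A no-$00$ string of length $m$ with $j$ ones and $q$ pinned endpoints is counted by $\binom{j+1-q}{m-j}$ (the standard slotting argument: the $j$ ones create $j+1$ gaps into which the $m-j$ holes are placed at most one per gap, each pinned endpoint deleting one end gap). With $q=2-\sigma_1-\sigma_\ell$ this gives numerator $\binom{k-n-1+\sigma_1+\sigma_\ell}{(N-k)-n_0}$.

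Then I would take $N\to\infty$ with $k/N\to\rho$ and $\ell,\sigma$ fixed. Writing $h=N-k$ and $g=2k-N$ for the numbers of holes and of `free' particles, one has $h/N\to 1-\rho$, $g/N\to 2\rho-1$ and $k/N\to\rho$, so $h,g,k$ all grow linearly in $N$. Setting $p=n+1-\sigma_1-\sigma_\ell$ and expanding as a product of falling factorials,
\[
\frac{\binom{k-n-1+\sigma_1+\sigma_\ell}{\,h-n_0\,}}{\binom{k}{h}}=\frac{(k-p)!}{k!}\cdot\frac{h!}{(h-n_0)!}\cdot\frac{g!}{(g-p+n_0)!},
\]
where the shifts $p,\,n_0,\,p-n_0$ are all bounded independently of $N$. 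Each factor has the form $r!/(r-q)!\sim r^{q}$ for fixed integer $q$, so the ratio is asymptotic to $k^{-p}h^{n_0}g^{p-n_0}=(g/k)^{p}(h/g)^{n_0}$ and converges to $\big((2\rho-1)/\rho\big)^{p}\big((1-\rho)/(2\rho-1)\big)^{n_0}$. Multiplying by the prefactor $k/N\to\rho$ yields the limit
\[
\rho\Big(\tfrac{2\rho-1}{\rho}\Big)^{\,n+1-\sigma_1-\sigma_\ell}\Big(\tfrac{1-\rho}{2\rho-1}\Big)^{\ell-n}.
\]

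Finally I would collect exponents and match with Definition \ref{def:GC}: the exponent of $\rho$ is $1-p=\sigma_1+\sigma_\ell-n$, that of $2\rho-1$ is $p-n_0=2n+1-\ell-\sigma_1-\sigma_\ell$, and that of $1-\rho$ is $n_0=\ell-n$, giving exactly $\rho^{\sigma_1+\sigma_\ell-n}(1-\rho)^{\ell-n}(2\rho-1)^{2n+1-\ell-\sigma_1-\sigma_\ell}=\pi_\rho(\eta_{\vert\Lambda_\ell}=\sigma)$, which completes the proof. The main obstacle is the numerator enumeration rather than the analysis: one must track the two cut edges of the circle correctly, so that the boundary pinning is applied to the arc endpoints and is not conflated with the internal window constraint $\sigma\in\hat{\mathcal{G}}_\ell$. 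Once the count $\binom{k-n-1+\sigma_1+\sigma_\ell}{h-n_0}$ is secured, the remaining limit is an entirely routine ratio of factorials; in particular no local central limit theorem is needed, since the window is fixed and every parameter shift is $O(1)$.
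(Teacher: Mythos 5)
Your proof is correct and takes essentially the same route as the paper: both reduce the marginal to the exact ratio $\binom{k-n-1+\sigma_1+\sigma_\ell}{N-\ell-k+n}\big/\lvert\mathcal{G}_{N,k}\rvert$ by counting the no-two-adjacent-holes fillings of the complementary arc with endpoints pinned according to $\sigma_1,\sigma_\ell$, and then pass to the limit $k/N\to\rho$ by elementary binomial asymptotics. The only difference is presentational: you spell out the slotting argument for the arc count and the falling-factorial estimates, which the paper leaves implicit (it cites the formula $\lvert\mathcal{E}_{N,k}\rvert=\binom{k-1}{N-k}$ and states the limit directly).
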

The proof of the claim is a straightforward calculation. 
For more details on the equivalence of ensembles in a more general setting, see \cite[Chapter 6]{blondel2020}. 
Fix $\sigma \in \{0,1\}^{\Lambda_\ell}$ and let $n = \sum_{x \in \Lambda_\ell}\sigma_\ell$.
Recall that $\nu_{N,k}$ is uniform on $\cG_{N,k}$. 
By counting the number of configurations on $\{\ell+1,\ldots,N\}\subset \mathbb{T}_N$ that are compatible with $\sigma_1$ and $\sigma_\ell$, and recalling that $|\mathcal{E}_{N,k}|=\binom{k-1}{N-k}$, we find
\begin{align*}
    \nu_{N,k}(\eta_{\vert\Lambda_\ell} = \sigma) &= \binom{k -n -1+\sigma_1+\sigma_\ell}{N-\ell-k+n}/|\cG_{N,k}| \\[1ex]
    &= \frac{k}{N}\binom{k -(n + 1-\sigma_1-\sigma_\ell)}{N - k -(\ell-n)} / \binom{k}{N-k}\\[1ex]
   &\to \rho^{\sigma_1+\sigma_2-n} (1-\rho)^{\ell-n}(2\rho -1)^{2n-\ell+1-\sigma_1-\sigma_\ell} = \pi_\rho(\eta_{\vert\Lambda_\ell}=\sigma)\,,
\end{align*}
as $N\to \infty$ and $k/N \to \rho \in(1/2,1)$.
\end{proof}

\appendix

%%%%%%%%%%%%%%%%%%%%%%%%%%%%%%%%%%%%%%%%%%%%%%

\section{Hitting time of the ergodic component} \label{sec:hitergodic}

%%%%%%%%%%%%%%%%%%%%%%%%%%%%%%%%%%%%%%%%%%%%%%

In this section we prove Proposition \ref{prop:hiterg}. 
To prove the proposition we consider the ZRP on the circle $\mathbb{T}_{n}$ with the generator in \eqref{eq:genzc}.
We let $Q_{t}^{\omega}$ denote the law of the ZRP at time $t$ with initial condition $\omega$. 

\begin{definition} \label{def:ergodicregionzero}
For a zero--range configuration $\omega\in\mathbb{N}_{0}^{\mathbb{T}_{N-k}}$ we say that the (clockwise) interval $[i,j]\subsetneq\mathbb{T}_{N-k}$ with $\lvert i-j\rvert<N-k$ is an \textit{ergodic region} for the configuration $\omega$ if $\omega(z)\geq1$ for all $z\in[i,j]$, and $\omega(i-1)=\omega(j+1)=0$. 
\end{definition}
We note that every ergodic region for an FEP configuration $\xi\in\mathcal{G}_{N,k}^{c}$, by Definition \ref{def:regions}, corresponds to an ergodic region from Definition \ref{def:ergodicregionzero} in the zero--range configuration $\Pi^\circ(\xi)$ as defined in Section \ref{sec:coupleZRP}. 
We define the ergodic component for the ZRP by
\begin{equation} \label{eq:ergodiczc}
G_{n}=\Bigl\{\omega\in\mathbb{N}_{0}^{\mathbb{T}_{n}}:\omega(i)\geq1\mbox{ for all }i\in\mathbb{T}_{n}\Bigr\}. 
\end{equation}
By Lemma \ref{lem:ZRPhit} it holds that 
\begin{equation} \label{eq:lawequiv}
P_{t}^{\xi}\bigl(\cG_{N,k}^{c}\bigr)=Q_{t}^{\Pi^\circ(\xi)}\bigl(G_{N-k}^{c}\bigr). 
\end{equation}

A key feature of the ZRP with (weakly) increasing rates is the following monotonicity property; see, for example, \cite{kipnislandim}[Section 2.5]. 
Let $(\omega_{t})_{t\geq0}$ and $(\omega'_{t})_{t\geq0}$ be two trajectories driven by the generator $\mathcal{L}^{\circ}_{\mathrm{ZR}}$, with initial conditions $\omega$ and $\omega'$ respectively. 
We say that $\omega\leq\omega'$ if $\omega(i)\leq\omega'(i)$ for all $i\in\mathbb{T}_{n}$. 
If $\omega\leq\omega'$ then there exists a coupling such that $\omega_{t}\leq\omega'_{t}$ for all $t\geq0$. 
An event $E$ is increasing if $\omega\in E$ implies $\omega'\in E$. 
If $E$ is increasing then the monotonicity implies that 
\begin{equation*} \label{eq:zrmon}
Q_{t}^{\omega}(E)\leq Q_{t}^{\omega'}(E) \quad \textrm{for} \ \ \omega \leq \omega'\,.
\end{equation*}
The ergodic component $G_{n}$ is an increasing event, therefore 
\begin{equation} \label{eq:increasing2}
\textrm{if }\ \omega\leq\omega' \ \textrm{ then}\quad  Q_{t}^{\omega}(G_{n}^{c})\geq Q_{t}^{\omega'}(G_{n}^{c}).
\end{equation}

\begin{proof}[Proof of Proposition \ref{prop:hiterg}]
Let $\xi\in\mathcal{I}_{N,k}^{m}$ be an arbitrary configuration containing at least $N-k$ particles in at most $m$ ergodic regions.
In particular, at most $m$ ergodic regions in the zero-range configuration $\Pi^\circ(\xi)$ collectively contain at least $N-k$ particles.
Deleting all particles outside of these $m$ regions there is still enough particles to reach the ergodic component $G_{N-k}$ (in which there is at least one particle at each site).
Assume, WLOG,  $\Pi^\circ(\xi)$ contains exactly $m$ such ergodic regions $I_{1},\ldots,I_{m}$ who's union contains at least $N-k$ particles, and let 
$$
\bar{\omega}(i)=
\begin{cases}
\Pi^\circ(\xi)(i)&\mbox{ if }i\in I_{1}\cup\cdots\cup I_{m},\\
0&\mbox{ otherwise},
\end{cases}
$$
i.e.\ the configuration in which all particles outside of the $m$ ergodic regions are deleted. 
By definition $\bar{\omega}\leq\Pi^\circ(\xi)$, so it follows from \eqref{eq:lawequiv} and \eqref{eq:increasing2} that 
$$
P_{t}^{\xi}\bigl(\cG_{N,k}^{c})=Q_{t}^{\Pi^\circ(\xi)}\bigl(G_{N-k}^{c}\bigr)\leq Q_{t}^{\bar{\omega}}\bigl(G_{N-k}^{c}\bigr).
$$

We now define a collection of stopping times for the zero--range process $(\bar{\omega}_{t})_{t\geq0}$ with law $Q_{t}^{\bar{\omega}}$. 
Let $\tau_{0}=0$ and suppose that $\tau_{s-1}$ is given for some $s\geq1$ with $\bar{\omega}_{\tau_{s-1}}\notin G_{N-k}$, then $\tau_{s}$ is the first time $t>\tau_{s-1}$ that one of the following three conditions is met:
\begin{itemize}
    \item[(1)] The number of ergodic regions in $\bar{\omega}_{t}$ is one less than the number of ergodic regions in $\bar{\omega}_{\tau_{s-1}}$, i.e.\ two ergodic regions evolve and meet to form a single ergodic region;
    \item[(2)] The number of `frozen' ergodic regions in $\bar{\omega}_{t}$ for which all particles are stuck is one greater than the number of `frozen' ergodic regions in $\bar{\omega}_{\tau_{s-1}}$, i.e.\ an ergodic region evolves until all of its particles become stuck;
    \item[(3)] $\bar{\omega}_{t}\in G_{N-k}$. 
\end{itemize}  
The number of such stopping times is at most $2m-1$ since (1) and (2) can occur at most $m-1$ times, and (3) occurs exactly once.
In particular, 
$$
Q_{t}^{\Pi(\xi)}(G_{N-k}^{c})\leq Q_{t}^{\bar{\omega}}(G_{N-k}^{c})= Q_{t}^{\bar{\omega}}\Biggl(\sum_{s\geq1}(\tau_{s}-\tau_{s-1})>t\Biggr).
$$
We define the set of frozen zero--range configurations on $\mathbb{T}_{n}$ to be
$$
\mathcal{F}_{n}=\bigl\{\omega\in\mathbb{N}_{0}^{\mathbb{T}_{n}}:\omega(i)\leq 1\mbox{ for all }i\in\mathbb{T}_{n}\bigr\},
$$
Let $\hat{\omega}\leq\bar{\omega}$ be the configuration obtained by deleting all but one ergodic region $I_{1}$, i.e.
$$
\hat{\omega}(i)=\bar{\omega}(i)\1_{\{i \in I_1\}}\,,
$$
and assume, wlog, that  $\hat{\omega}\notin\mathcal{F}_{N-k}$ (since at least one ergodic block must not be frozen). 
Let $(\hat{\omega}_{t})_{t\geq0}$ be the ZRP generated by $\mathcal{L}_\mathrm{ZR}^{N-k}$ with initial condition $\hat{\omega}$.

By monotonicity, the hitting time of the event $G_{N-k}\cup\mathcal{F}_{N-k}$ in the process $(\hat{\omega}_{t})_{t\geq0}$ is stochastically larger than $\tau_{1}$; in the dynamics of $(\bar{\omega}_{t})_{t\geq0}$ the ergodic region $I_{1}$ either evolves to become completely frozen or the stopping time $\tau_{1}$ occurs at some earlier time.  
In particular, it holds that
$$
Q_{t}^{\bar{\omega}}(\tau_{1}>t)\leq Q_{t}^{\hat{\omega}}(G_{N-k}^{c}\cap\mathcal{F}_{N-k}^{c}). 
$$
We use the following claim: 
\begin{claim} \label{claim:zero_obep}
Let $E_{n,\ell}$ denote the set of zero--range configurations on $\mathbb{T}_{n}$ containing a single ergodic region containing $\ell$ particles. 
For $\epsilon>0$ there exists a constant $C>0$ which does not depend on $\epsilon$ such that
$$
\max_{\omega\in E_{n,\ell}}Q_{Cn^{2}\log n}^{\omega}\bigl(G_{n}^{c}\cap F_{n}^{c})\leq\epsilon,
$$
for all $n$ sufficiently large, uniformly in $\ell$.
\end{claim}
By Claim \ref{claim:zero_obep} it follows that 
$
Q_{CN^{2}\log N}^{\hat{\omega}}(G_{N-k}^{c}\cap\mathcal{F}_{N-k}^{c})\leq\epsilon,
$
for all $N$ sufficiently large. 
By application of the Strong Markov property, and repeating the above argument for each time interval $[\tau_{s-1},\tau_{s}]$, it holds that
$$
P_{C(2m-1)N^{2}\log N}^{\xi}(\mathcal{G}_{N,k}^{c})\leq(2m-1)\epsilon,
$$
for $N$ sufficiently large.
Since $\xi\in\mathcal{I}_{N,k}^{m}$ was arbitrary this completes the proof. 
\end{proof} 
Claim \ref{claim:zero_obep} will follow from a comparison to the $(1/2,1/2,0,0,1/2)$--OBEP. 
\begin{proof}[Proof of Claim \ref{claim:zero_obep}]
We define a map $\Phi:E_{n,\ell}\rightarrow\Omega_{\ell-1}$ from the zero-range configurations to the OBEP configurations on a segment of length $\ell-1$. 
If the ergodic region in $\omega\in E_{n,\ell}$ consists of a single site we set $\Phi(\omega)=\boldsymbol{0}$, the empty segment. 
If the ergodic region is given by $[i,j]$ we label zero--range stacks (clockwise) from left to right by $r=1,\ldots,j-i+1$. 
Each of the stacks corresponds to a particle in the OBEP, except the rightmost stack which corresponds to the reservoir on the right boundary. 
Labelling particles from left to right in the OBEP the position of the $r$-th particle is given by
$
\sum_{z=i}^{r}\omega(z). 
$
In particular the number of holes between the $r$-th and $(r+1)$-th particles in the OBEP is $\omega(r)-1$. 
See Figure \ref{fig:zeropens}.
\begin{figure}[tb]
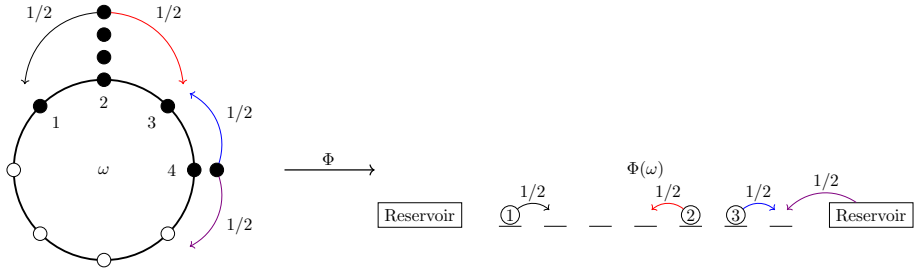

\begin{center}
\scalebox{0.6}{\zeropens}
\end{center}
\caption{The mapping $\Phi:E_{8,8}\rightarrow\Omega_{7}$ for a configuration $\omega\in E_{8,8}$. 
All possible transitions of the ZRP and OBEP are indicated. 
Matching colours indicate corresponding transitions in the coupling, and the stacks in $\omega$ are labelled $1,2,3$ and $4$.}
\label{fig:zeropens}
\end{figure}
To couple these processes, each time a particle in the ZRP jumps left from the leftmost stack to an empty site, we input a particle at the left boundary in the OBEP. 
The zero-range stacks and OBEP particles are then relabelled.  
Each time a particle in the ZRP jumps right from the rightmost stack to an empty site we input a particle at the right boundary in the OBEP. 
If a particle in the ZRP jumps left (\textit{resp.} right) from stack $r$ to stack $r-1$ (\textit{resp.}  $r+1$), the $(r-1)$-th (\textit{resp.}  $r$-th) particle in the OBEP jumps one step right (\textit{resp.} left). 

The mapping $\Phi$ is not bijective; every rotation of a configuration in $E_{n,\ell}$ results in the same OBEP configuration. 
However, 
$$
Q_{t}^{\omega}\bigl(G_{n}^{c}\cap\mathcal{F}_{n}^{c}) = P_{t}^{\Phi(\omega)}\Bigl(\zeta\in\Omega_{\ell-1}:\sum_{x}\zeta(x)<\min(n-1,\ell-1)\Bigr)\leq P_{t}^{\Phi(\omega)}\bigl(\{\zeta \equiv 1 \}^{c}\bigr).
$$
The equilibrium distribution of the OBEP is concentrated on the full configuration $\{\zeta \equiv 1 \}=\{\zeta\,:\, \forall x \in [\ell-1],\ \zeta(x)= 1 \}$ and the claim follows from \cite[Theorem 1.1]{gantert2020mixing}.
\end{proof}
\bibliographystyle{plain} 
\bibliography{ref_paper}
\end{document}